\documentclass[11pt, reqno]{amsart}
\usepackage{fullpage}
\usepackage[titletoc]{appendix}

\newif\ifHideFoot
\HideFoottrue  
\HideFootfalse  


\usepackage{amssymb,amsmath,amsthm,amscd,mathrsfs,graphicx, color}
\usepackage[cmtip,all,matrix,arrow,tips,curve]{xy}
\usepackage{hyperref}
\usepackage[usenames,dvipsnames]{xcolor}
\usepackage{xypic}
\hypersetup{colorlinks=true,citecolor=ForestGreen,linkcolor=Maroon,urlcolor=NavyBlue}
\usepackage{mathtools}
\usepackage{mathpazo}


\numberwithin{equation}{section}
\newtheorem{teo}{Theorem}[section]
\newtheorem{pro}[teo]{Proposition}
\newtheorem{lem}[teo]{Lemma}
\newtheorem{cor}[teo]{Corollary}

\newtheorem{teoalpha}{Theorem}

\newtheorem{coralpha}{Corollary}
\newtheorem{conalpha}{Conjecture}

\theoremstyle{definition}

\newtheorem{exa}[teo]{Example}

\theoremstyle{remark}
\newtheorem{rem}[teo]{Remark}


\ifHideFoot

\newcommand{\Yano}[1]{}
\newcommand{\Jeff}[1]{}
\newcommand{\Charles}[1]{}

\else 

\newcommand{\marg}[1]{\normalsize{{
\color{red}\footnote{{\color{blue}#1}}}{\marginpar[\vskip
-.25cm{\color{red}\hfill\tiny\thefootnote$\implies$}]{\vskip
-.2cm{\color{red}$\impliedby$\tiny\thefootnote}}}}}
\newcommand{\Yano}[1]{\marg{(Yano) #1}}
\newcommand{\Jeff}[1]{\marg{(Jeff) #1}}
\newcommand{\Charles}[1]{\marg{(Charles) #1}}

\fi

\DeclareMathOperator{\coniveau}{N}

\def\mmu{{\pmb\mu}}

\def\inv{^{-1}}

\def\cx{{\mathbb C}}

\def\rat{{\mathbb Q}}

\renewcommand{\bar}[1]{{\overline{#1}}}

\DeclareMathOperator{\aut}{Aut}



\title[Algebraic normal functions]{Normal functions for algebraically trivial cycles are algebraic for arithmetic reasons}

\author{Jeffrey D. Achter}
\address{Colorado State University, Department of Mathematics,
Fort Collins, CO 80523,
USA}
\email{j.achter@colostate.edu}

\author{Sebastian Casalaina-Martin }
\address{University of Colorado, Department of Mathematics, 
Boulder, CO 80309, USA }
\email{casa@math.colorado.edu}

\author{Charles Vial}
\address{Universit\"at Bielefeld, Fakult\"at f\"ur Mathematik, Germany} 
\email{vial@math.uni-bielefeld.de}

\thanks{
The second author was partially
supported by an NSA grant (H98230-16-1-0053) and Simons Foundation grant
(581058). }


\begin{document}

\vspace{-5pt}
\begin{abstract}
For families of smooth complex projective varieties we show that
normal functions arising from algebraically trivial cycle classes are
algebraic, and defined over the field of definition of the family.  
In particular, the zero loci
of those functions are algebraic and defined over such a field of definition.
This proves a conjecture of Charles.
\end{abstract}

\maketitle

\section{Introduction}

Let $\mathsf f:\mathsf X\to \mathsf B$ be a smooth
 surjective projective morphism of complex algebraic
manifolds, let $n$ 
be an integer, and let $\mathsf J^{2n+1}(\mathsf X/\mathsf B)\to \mathsf B$ be the
$(2n+1)$-st relative Griffiths intermediate Jacobian.
If $\mathsf Z\in
\operatorname{CH}^{n+1}(\mathsf X)$ is an algebraic cycle class such that for
every $\mathsf b\in \mathsf B$ the Gysin fiber $\mathsf Z_{\mathsf b}$  is
algebraically (resp.~homologically) trivial, then there is an
associated holomorphic function 
$$
\nu_{\mathsf Z}:\mathsf B\longrightarrow \mathsf J^{2n+1}(\mathsf X/\mathsf B),
\qquad
\nu_{\mathsf Z}(\mathsf b)=\mathsf {AJ}_{\mathsf X_{\mathsf b}}(\mathsf
Z_{\mathsf b}),
$$
where  $\mathsf {AJ}_{\mathsf X_{\mathsf b}}:\operatorname{CH}^{n+1}(\mathsf
X_{\mathsf b})_{hom}\to \mathsf J^{2n+1}(\mathsf X_{\mathsf b})$ is the
Abel--Jacobi map on homologically trivial cycles in the fiber $\mathsf
X_{\mathsf b}$.    Such a function is  called an algebraically
  motivated (resp.~motivated) normal function motivated by the
cycle class $\mathsf Z$.  

More generally, let $\mathsf B$ be a complex manifold, and let $\mathcal H$ be a
variation of pure negative weight integral Hodge structures
over $\mathsf B$.  In \cite{saitoJAG96}, Saito defines the notion of an
admissible normal function as a holomorphic
section
$\nu:\mathsf B\to \mathsf J(\mathcal H)$ of the associated family of
generalized intermediate Jacobians $\mathsf J(\mathcal H)\to \mathsf
B$ that satisfies a version of Griffiths horizontality and has
controlled asymptotic behavior near the boundary 
(see \emph{e.g.}, \cite{BP-13-Comp}).
Despite the transcendental nature of the definition of admissible normal
functions, 
there is the following conjecture due to Green and Griffiths (\emph{e.g.}, \cite[p.883]{BP-09-Annals},  \cite[Conj.~1]{charlesIMRN10},  \cite[Conj.~1.1]{schnell-Inv-12}, \cite[p.1914]{BP-13-Comp}):

\begin{conalpha}[{Green--Griffiths}]\label{CJ:GG}
The zero locus of an admissible normal function  on a complex algebraic manifold
is algebraic.
\end{conalpha}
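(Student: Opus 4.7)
The plan is to reduce the problem to a situation where one can apply Chow's theorem. Since $\mathsf B$ is a complex algebraic manifold, choose a smooth projective compactification $\bar{\mathsf B}$ with simple normal crossing boundary divisor $D=\bar{\mathsf B}\setminus\mathsf B$. The zero locus $Z(\nu)$ is \emph{a priori} only a closed analytic subset of $\mathsf B$, so to prove algebraicity it suffices, by Chow's theorem, to exhibit $Z(\nu)$ as the restriction to $\mathsf B$ of a closed analytic subset of the projective variety $\bar{\mathsf B}$.

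The key step is the existence of a ``N\'eron model'' for admissible normal functions: a separated analytic space $\bar{\mathsf J}(\mathcal H)\to\bar{\mathsf B}$ extending $\mathsf J(\mathcal H)\to\mathsf B$, equipped with a zero section that is closed analytic, and with the property that any admissible normal function $\nu:\mathsf B\to\mathsf J(\mathcal H)$ extends uniquely to a holomorphic section $\bar\nu:\bar{\mathsf B}\to\bar{\mathsf J}(\mathcal H)$. Given such a model, the locus $\bar Z:=\bar\nu^{-1}(\text{zero section})$ is a closed analytic subset of $\bar{\mathsf B}$ whose restriction to $\mathsf B$ is $Z(\nu)$; Chow's theorem then yields that $\bar Z$, and hence $Z(\nu)=\bar Z\cap\mathsf B$, is algebraic, as required.

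The main obstacle, and the technical heart of the argument, is the construction of $\bar{\mathsf J}(\mathcal H)$ together with the extension of $\bar\nu$. This demands a detailed analysis of the limit mixed Hodge structure and of the relative monodromy weight filtrations along each stratum of $D$, via the nilpotent orbit theorem and the several-variable $\operatorname{SL}_2$-orbit theorem of Cattani--Kaplan--Schmid. On a one-parameter boundary this is the approach of Brosnan--Pearlstein, who identify the correct graded piece of the limit mixed Hodge structure in which the limit value of $\nu$ must lie and show the resulting Hausdorffness of the identity component of the naive N\'eron model. In higher codimension the difficulty compounds because the relative monodromy filtrations must interact coherently across the strata of $D$; this was resolved by Brosnan--Pearlstein in the general case and, independently and by quite different methods, by Schnell using Saito's theory of mixed Hodge modules. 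The admissibility hypothesis on $\nu$ is precisely what makes the relative monodromy filtrations cohere in this way, and once it is leveraged to produce $\bar\nu$ as a holomorphic section of a separated N\'eron model, the algebraicity of $Z(\nu)$ is essentially formal.
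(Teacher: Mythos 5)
Your sketch accurately describes the strategy of Brosnan--Pearlstein and Schnell, which does prove Conjecture~\ref{CJ:GG} in full generality, but this is \emph{not} the route the paper takes, and in fact the paper does not reprove the full conjecture at all. The paper only addresses the special case of normal functions motivated by fiberwise \emph{algebraically} trivial cycle classes, and it does so by a fundamentally different, arithmetic argument. Rather than compactifying $\mathsf B$, building an analytic N\'eron model $\bar{\mathsf J}(\mathcal H)\to\bar{\mathsf B}$ via limit mixed Hodge structures and the $\operatorname{SL}_2$-orbit theorem, extending $\nu$, and then invoking Chow's theorem, the paper proves directly that the normal function $\nu_{\mathsf Z}$ itself is an algebraic map, defined over the field of definition $F$ of the data. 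Concretely: the image $\mathsf J^{2n+1}_a(\mathsf X_{\mathsf u})$ of the Abel--Jacobi map on algebraically trivial cycles for a very general fiber descends to a distinguished model over the function field $K$ of $\mathsf B$ (Theorem~\ref{T:DMIJ-0} and Proposition~\ref{P:DNF-K}, resting on the fact from \cite{ACMVabtriv} that algebraically trivial cycles over $K$ are parameterized by abelian varieties over $K$); this abelian variety and the associated section spread to a Zariski open $U\subseteq B$ (Theorem~\ref{T:ANF-MK-LG}, Corollary~\ref{C:ANF-MK}); the abelian scheme then extends over all of $B$ using the N\'eron--Ogg--Shafarevich criterion in codimension one and the Faltings--Chai extension theorem in higher codimension; and the normal function extends because it agrees with an algebraic section on a dense open. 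Algebraicity of the zero locus is then immediate, with no need for Chow's theorem or any admissibility/degeneration analysis.

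The trade-off between the two routes is worth being explicit about. Your approach is strictly more general in scope: it treats arbitrary admissible normal functions, including those arising from homologically (but not algebraically) trivial cycles, where the paper's method does not apply (see the paper's discussion of quintic Calabi--Yau threefolds). The paper's approach, in exchange for restricting to algebraically trivial cycles, yields a stronger conclusion: not merely that the zero locus is algebraic, but that the normal function is itself a morphism of algebraic varieties, and moreover is defined over the same subfield $F\subseteq\mathbb C$ as the family and the cycle. That arithmetic control is exactly what proves Conjecture~\ref{CJ:ANFMK} (Charles), and it is not visible from the N\'eron model route, as the paper notes in its introduction.
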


Proofs of this conjecture were given in a series of papers\,:  $\dim  \mathsf B=1$
\cite[Cor.~1]{saitoArXiv08}, \cite[Thm.~4.5]{BP-09-Annals}, $\dim \mathsf B\ge
1$ 
\cite[Thm.~C]{schnell-Inv-12},  \cite[Cor.~1.3]{BP-13-Comp} (see also
S\'em.~Bourbaki \cite{charlesBourbaki12}). 
In this paper, we are interested in algebraic and arithmetic questions concerning motivated normal functions. 
First, 
for algebraically motivated (resp.~motivated) normal
functions, if $\mathsf X$, $\mathsf B$, $\mathsf f$, and $\mathsf Z$ are all
defined over a subfield $F\subseteq \mathbb C$, we say that the normal function 
$\nu_{\mathsf Z}$ is algebraically $F$-motivated (resp.~$F$-motivated),  
and it is natural to ask whether the zero locus of $\nu_{\mathsf Z}$ in $\mathsf
B$ is also defined over~$F$ \cite[p.2284]{charlesIMRN10}\, (see also \cite[Conj.~81]{KP-Exp-11}):

\begin{conalpha}[{Charles}]\label{CJ:ANFMK} Let  $F\subseteq
\mathbb C$ be a subfield. 
The zero locus of an algebraically $F$-motivated (resp.~$F$-motivated)
normal function  is  algebraic and defined over $F$.
\end{conalpha}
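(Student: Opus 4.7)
By Conjecture \ref{CJ:GG}, which is by now established, the zero locus $Y = \nu_{\mathsf Z}^{-1}(\mathsf 0)$ is automatically algebraic; the new content of Charles's conjecture is the descent of $Y$ from $\mathbb C$ to the subfield~$F$. The plan is to prove, in the algebraically $F$-motivated case, the stronger statement that $\nu_{\mathsf Z}$ itself --- which \emph{a priori} is only a transcendental section of a complex torus bundle --- factors through an abelian scheme defined over $F$ and becomes a morphism of $F$-varieties. Once this is in hand, the zero locus is the pullback of the zero section along $\nu_{\mathsf Z}$, so its $F$-structure is automatic. For the merely motivated case, the plan is to reduce to the algebraically trivial case or, failing that, to perform a direct Galois descent on $Y$.

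For algebraically $F$-motivated normal functions, the image of fiberwise Abel--Jacobi lies in the algebraic intermediate Jacobian $\mathsf J_a^{2n+1}(\mathsf X_{\mathsf b}) \subseteq \mathsf J^{2n+1}(\mathsf X_{\mathsf b})$, an honest abelian variety. The key step is to assemble the relative algebraic intermediate Jacobian $\mathcal{J}_a^{2n+1}(\mathsf X/\mathsf B) \to \mathsf B$ as an abelian scheme and to equip it with a distinguished model over $F$ compatible with the formation of Abel--Jacobi on algebraically trivial cycles. The necessary ingredients are (a) Murre's universal regular homomorphism, which makes Abel--Jacobi into a \emph{regular} map from a geometrically integral algebraic variety parametrizing algebraically trivial cycles, and (b) a descent/functoriality statement for this universal regular homomorphism under $\aut(\mathbb C/F)$; the technical heart is to promote such a pointwise descent to a relative descent over the base $\mathsf B$. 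Granting these, the holomorphic section $\nu_{\mathsf Z}$ is algebraic and defined over $F$ because $\mathsf Z$ is, and $Y$ is then the fibered product of $\nu_{\mathsf Z}$ with the zero section of $\mathcal{J}_a^{2n+1}(\mathsf X/\mathsf B)$ over $\mathsf B$, hence an $F$-subscheme of $\mathsf B$.

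For the motivated (merely homologically trivial) case, the Abel--Jacobi image generally escapes the algebraic part of the intermediate Jacobian, so there is no abelian scheme onto which to descend $\nu_{\mathsf Z}$. Here I would proceed Galois-theoretically: for $\sigma \in \aut(\mathbb C/F)$ one has $\mathsf Z^\sigma = \mathsf Z$, and one must compare $\nu_{\mathsf Z}(\sigma(\mathsf b))$ with the $\sigma$-conjugate of $\nu_{\mathsf Z}(\mathsf b)$. Because Abel--Jacobi is transcendental this comparison is not formal; the hope is that the two vanishing conditions coincide after passing to Schnell's algebraic Néron model of $\mathcal H$, where the zero locus is cut out by genuinely algebraic equations whose $\aut(\mathbb C/F)$-stability can be checked directly. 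A complementary approach is to spread the triviality of $\mathsf Z_{\mathsf b}$ at a single AJ-trivial point $\mathsf b \in Y$ into a family and reduce to the algebraically trivial case treated above.

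I expect this motivated case to be the main obstacle. The algebraically trivial setting is governed by the robust machinery of abelian schemes, universal regular homomorphisms, and their descent, and once these are properly set up relatively over $\mathsf B$ the conclusion follows essentially formally. The motivated case, lacking an algebraic target, requires either a genuine reduction to algebraic triviality --- on the face of it a statement about the Griffiths group --- or an independent $\aut(\mathbb C/F)$-equivariance argument at the Hodge-theoretic level, perhaps mediated by Schnell's Néron model or by absolute-Hodge considerations.
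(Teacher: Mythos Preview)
Your plan for the algebraically $F$-motivated case is essentially the paper's approach: prove the stronger statement that $\nu_{\mathsf Z}$ factors through an abelian $F$-scheme $\mathsf J^{2n+1}_a(\mathsf X/\mathsf B)$ and is itself an $F$-morphism (Theorem~\ref{T:ANF-MK}), whence the zero locus is the pullback of the zero section. Two points of divergence are worth flagging. First, you invoke Conjecture~\ref{CJ:GG} at the outset; the paper deliberately avoids this, obtaining algebraicity as a byproduct of the $F$-structure rather than as input. Second, the paper does not use Murre's algebraic representative. Instead it uses the distinguished model of \cite{ACMVdmij}, built from a curve and a correspondence, and the crucial new step is Proposition~\ref{P:DNF-K}: the distinguished model and distinguished normal function over the generic point are \emph{independent of the embedding} $K\hookrightarrow\cx$. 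This is what lets one spread from a single very general fiber to a Zariski open $U\subseteq B$ (Theorem~\ref{T:ANF-MK-LG}), and then extend over codimension~$1$ via N\'eron--Ogg--Shafarevich and over codimension~$\ge 2$ via Faltings--Chai. Your phrase ``promote pointwise descent to relative descent'' names exactly this difficulty, but the mechanism you propose (functoriality of a universal regular homomorphism) is not the one used; the independence-of-embedding argument, going through torsion and Bloch's map, is the actual technical heart. Note also a subtlety you elide: the fiber of $\mathsf J^{2n+1}_a(\mathsf X/\mathsf B)$ at a special point $\mathsf b$ need \emph{not} equal $\mathsf J^{2n+1}_a(\mathsf X_{\mathsf b})$, since coniveau can jump (Remark~\ref{R:Jnotation}); one only gets agreement at very general points, and this suffices.

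For the merely $F$-motivated case you correctly identify the obstruction---there is no abelian target---and you are candid that your suggestions (Galois descent via Schnell's N\'eron model, or reduction to algebraic triviality) are speculative. The paper does not prove this case either; it establishes only the algebraically $F$-motivated half of Conjecture~\ref{CJ:ANFMK} (Corollary~\ref{C:ZL}), and explicitly discusses the Calabi--Yau threefold example of Voisin as an instance where the methods do not apply. So on this half your proposal is not a proof but an honest statement of where the problem lies, which matches the paper's own assessment.
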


Several partial results are known.  
Regarding the $F$-motivated case of Conjecture~\ref{CJ:ANFMK}, 
a special case of a result of Saito \cite[Cor.~1]{Saito-normal} on admissible normal functions implies that if an irreducible component of 
the zero locus of an $F$-motivated normal function contains a point of $\mathsf B$ that is defined over $F$, then  the entire component of the zero locus is defined over $F$ (see also  \cite[Thm.~3]{charlesIMRN10},  \cite[Thm. 89]{KP-Exp-11}).
Regarding the algebraically $F$-motivated case of Conjecture~\ref{CJ:ANFMK},
Kerr and Pearlstein have shown in \cite[Con.~81, $\widetilde{\mathfrak
{ZL}}(D,1)_{alg}$, Thm.~88]{KP-Exp-11} that the zero locus of an algebraically
$F$-motivated normal function is an algebraic
subset of $\mathsf B$ defined over a finite extension of~$F$.  
All of the aforementioned results take as a starting point the validity of Conjecture~\ref{CJ:GG}.
\medskip

In this paper  we directly prove Conjecture
\ref{CJ:ANFMK} in the algebraically $F$-motivated
case.  (In particular, we do not rely on earlier work on Conjecture
\ref{CJ:GG}.)
In fact, we 
prove a stronger result, namely that 
algebraically $F$-motivated normal
functions are themselves algebraic and defined
over $F$\,:

\setcounter{teoalpha}{0}
\begin{teoalpha}\label{T:ANF-MK} Let 
$\mathsf f:\mathsf X\to \mathsf B$ be a smooth 
surjective
projective
morphism of complex algebraic manifolds (not necessarily connected), 
let $n$ be a nonnegative
integer, let $\mathsf J^{2n+1}(\mathsf X/\mathsf B)\to \mathsf B$ be
the $(2n+1)$-st relative Griffiths intermediate Jacobian.  There is a relative
algebraic complex subtorus $\mathsf J^{2n+1}_a(\mathsf X/\mathsf B)\subseteq
\mathsf J^{2n+1}(\mathsf X/\mathsf B)$ over $\mathsf B$ such that for very
general $\mathsf u\in \mathsf B$ the fiber $\mathsf J_a^{2n+1}(\mathsf X/\mathsf
B)_{\mathsf u}\subseteq \mathsf J^{2n+1}(\mathsf X_{\mathsf u})$ is the image
$\mathsf J^{2n+1}_a(\mathsf X_{\mathsf u})$ of the Abel--Jacobi map 
$\mathsf {AJ}_{\mathsf X_{\mathsf u}}:\operatorname{A}^{n+1}(\mathsf X_{\mathsf
u})\to \mathsf J^{2n+1}(\mathsf X_{\mathsf u})$, 
and for any algebraic cycle class 
$\mathsf Z\in \operatorname{CH}^{n+1}(\mathsf X)$
such that for
every $\mathsf b\in \mathsf B$ the Gysin fiber $\mathsf Z_{\mathsf b}$  is
algebraically trivial\,:

\begin{enumerate}
\item The normal function  $
\nu_{\mathsf Z}:\mathsf B\to \mathsf J^{2n+1}(\mathsf X/\mathsf B)$ has image
contained in $\mathsf J^{2n+1}_a(\mathsf X/\mathsf B)$ and is an algebraic map. 

\item If, moreover, $\mathsf X$, $\mathsf B$, $\mathsf f$, and $\mathsf Z$ are
all defined over a field $F\subseteq \mathbb C$, then so are $\mathsf
J^{2n+1}_a(\mathsf X/\mathsf B)$ and the morphisms $\mathsf J^{2n+1}_a(\mathsf
X/\mathsf B)\to \mathsf B$ and $\nu_{\mathsf Z}$.

\end{enumerate}
\end{teoalpha}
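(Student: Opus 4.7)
The overall strategy is to realize $\mathsf{J}^{2n+1}_a(\mathsf{X}/\mathsf{B})$ fiberwise as the image of an Albanese variety of a parameter space of algebraically trivial cycles---this being the classical reason why the image of $\mathsf{AJ}_{\mathsf{X}_b}$ on $\operatorname{A}^{n+1}(\mathsf{X}_b)$ is already an abelian subvariety of the Griffiths intermediate Jacobian---and then to carry out this construction in families in a manner that is functorial with respect to the base field.

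For each irreducible component $C$ of the relative Chow scheme $\operatorname{Chow}^{n+1}(\mathsf{X}/\mathsf{B})$, pick a smooth proper cover $\tilde C \to C$ carrying a universal cycle $\mathsf{W}_C$ on $\mathsf{X} \times_{\mathsf{B}} \tilde C$. The assignment $(c_1,c_2) \mapsto \mathsf{AJ}_{\mathsf{X}_b}(\mathsf{W}_{C,c_1} - \mathsf{W}_{C,c_2})$ defines a holomorphic map $\tilde C \times_{\mathsf{B}} \tilde C \to \mathsf{J}^{2n+1}(\mathsf{X}/\mathsf{B})$; by the universal property of the relative Albanese (a relative Matsusaka theorem) it factors through $\operatorname{Alb}(\tilde C/\mathsf{B})$, yielding a morphism $\phi_C \colon \operatorname{Alb}(\tilde C/\mathsf{B}) \to \mathsf{J}^{2n+1}(\mathsf{X}/\mathsf{B})$ of group schemes over $\mathsf{B}$. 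Its image $\mathsf{J}_C$ is an abelian subscheme, and since fibers of all such $\mathsf{J}_C$ sit in a relative complex torus of fixed dimension, the sum $\sum_C \mathsf{J}_C$ stabilizes after finitely many components. Define $\mathsf{J}^{2n+1}_a(\mathsf{X}/\mathsf{B}) := \sum_C \mathsf{J}_C$; at a very general $b$ its fiber coincides with the image of $\mathsf{AJ}_{\mathsf{X}_b}$ on $\operatorname{A}^{n+1}(\mathsf{X}_b)$ by construction.

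For part (1), given $\mathsf{Z}$ with algebraically trivial Gysin fibers, spread out the algebraic equivalence: produce a smooth connected $\mathsf{T}\to\mathsf{B}$, a universal cycle $\mathsf{W}_{\mathsf{T}}$ on $\mathsf{X}\times_{\mathsf{B}}\mathsf{T}$, and sections $s_1,s_2\colon\mathsf{B}\to\mathsf{T}$ with $\mathsf{Z} = s_1^*\mathsf{W}_{\mathsf{T}} - s_2^*\mathsf{W}_{\mathsf{T}}$ in $\operatorname{CH}^{n+1}(\mathsf{X})$. Then $\nu_{\mathsf{Z}}$ equals the composite $\mathsf{B} \xrightarrow{s_1 - s_2} \operatorname{Alb}(\mathsf{T}/\mathsf{B}) \xrightarrow{\phi_{\mathsf{T}}} \mathsf{J}^{2n+1}_a(\mathsf{X}/\mathsf{B})$, which is a morphism of schemes and hence algebraic. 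For part (2), each ingredient respects $F$-structure: components of $\operatorname{Chow}^{n+1}(\mathsf{X}/\mathsf{B})$ are Galois-permuted (so taking orbits yields $F$-schemes), the relative Albanese of a smooth $F$-scheme is itself an $F$-scheme, the morphism $\phi_C$ is $F$-algebraic because it is determined on $F$-points by cycle-theoretic formulas over $F$, and the spread $\mathsf{T}$ together with the sections $s_i$ can be arranged to be defined over $F$.

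The principal obstacle is the fiberwise identification at a very general point: verifying that the finitely many $\mathsf{J}_{C,b}$ collectively surject onto the image of $\mathsf{AJ}_{\mathsf{X}_b}$ on $\operatorname{A}^{n+1}(\mathsf{X}_b)$. This requires matching the components of $\operatorname{Chow}^{n+1}(\mathsf{X}_b)$ with those of $\operatorname{Chow}^{n+1}(\mathsf{X}/\mathsf{B})$ through a countable-union density argument combined with a semicontinuity statement for the dimensions of the $\mathsf{J}_C$. A secondary technical point is the Galois-equivariance of the Chow-component bookkeeping for the $F$-descent, and the choice of a single spread $\mathsf{T}$ over $F$ in the proof of (1); these are where the main care needs to be taken, with the remaining conclusions following from functoriality of the Chow scheme, the relative Albanese, and the Abel--Jacobi map.
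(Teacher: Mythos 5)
Your overall template---realizing $\mathsf{J}^{2n+1}_a$ as the image of a correspondence from a parameter space of cycles---is in the same spirit as the paper's starting point, which builds the distinguished model of $\mathsf{J}^{2n+1}_a(\mathsf{X}_{\eta})$ from a correspondence with a curve surjecting onto the image of the Abel--Jacobi map. But there are three places where the proposal has genuine gaps, not merely details to fill.

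\textbf{Extension over all of $\mathsf B$.} The relative Albanese $\operatorname{Alb}(\tilde C/\mathsf{B})$ only exists where $\tilde C\to\mathsf{B}$ is smooth and proper. Since $\tilde C$ is a resolution of a component of a Chow variety, the locus where this holds is at best a proper Zariski-open $\mathsf{U}_C\subset\mathsf{B}$, and these loci shrink as you add components. Likewise for $\operatorname{Alb}(\mathsf{T}/\mathsf{B})$ in part (1): having $\mathsf{T}\to\mathsf{B}$ smooth proper with sections realizing the algebraic equivalence \emph{fiber by fiber over all of} $\mathsf{B}$ is exactly the kind of thing you cannot just assume. Your construction therefore only produces an abelian scheme and a section over a Zariski-open $\mathsf{U}\subset\mathsf{B}$, which is the content of the paper's Corollary 4.2, whereas the theorem is about all of $\mathsf{B}$. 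Extending from $\mathsf{U}$ to $\mathsf{B}$ occupies an entire section of the paper (good reduction and N\'eron--Ogg--Shafarevich over codimension-one strata, then Faltings--Chai over codimension $\geq 2$, then an extension argument for the morphism of VHS and for the section using separatedness). This is entirely absent from the proposal, and it is not a routine step: it is where the \emph{algebraicity} of the normal function over the whole base is actually won.

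\textbf{Fiberwise identification at very general points.} You flag this as the ``principal obstacle,'' and it really is. The difficulty is not just bookkeeping of Chow components: it is that for a very general $\mathsf{u}\in\mathsf{B}$ you need to know that the image of $\mathsf{AJ}_{\mathsf{X}_{\mathsf u}}$ on \emph{all} algebraically trivial classes on $\mathsf{X}_{\mathsf u}$ is hit by the spread. The fiber $\mathsf{X}_{\mathsf u}$ is isomorphic to a geometric generic fiber via a different embedding $K\hookrightarrow\mathbb{C}$, so what is really needed is that the image of the Abel--Jacobi map, viewed as an abelian variety over $K$, does not depend on the choice of complex embedding of $K$. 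This is Proposition 3.1 in the paper (independence of field embedding), and its proof uses the result of ACMV that algebraically trivial cycles defined over $K$ are parameterized by abelian varieties over $K$, plus an $\ell$-adic torsion argument. A ``countable-union density plus semicontinuity'' sketch does not give this; in fact, without independence of the embedding one cannot even rule out that $\nu_{\mathsf Z}$ vanishes at some $F$-very general point but not others, which would already sink the zero-locus statement.

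\textbf{The $F$-structure on $\phi_C$ and on the image.} Your claim that $\phi_C$ is $F$-algebraic ``because it is determined on $F$-points by cycle-theoretic formulas over $F$'' is not a proof. The target $\mathsf{J}^{2n+1}(\mathsf{X}/\mathsf{B})$ does not carry an $F$-structure at all, and the Abel--Jacobi map on a very general fiber is transcendental; its $\operatorname{Aut}(\mathbb{C}/F)$-equivariance is exactly the deep input coming from the distinguished-models theory. To give the image $\mathsf{J}_C$ an $F$-structure you must show the kernel of the morphism of complex tori is $\operatorname{Aut}(\mathbb{C}/F)$-stable, which in the paper is done via identification of torsion with the kernel of an ${\ell}$-adic map induced by the correspondence. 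This is the content of Lemma 2.3 and the construction in \S 2.3, not something that follows from ``functoriality of the Chow scheme and the Albanese.''

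In summary, your approach is morally aligned with the paper's over the generic fiber of $\mathsf{B}$, but it is missing the two mechanisms that make the theorem true with the stated strength: the independence of the distinguished model from the complex embedding (which controls very general fibers and gives $F$-rationality), and the extension machinery over the full base. As written, the proposal would at most reprove a version of Corollary 4.2 (the statement over a Zariski open), and even that requires the $\operatorname{Aut}(\mathbb{C}/F)$-equivariance of the Abel--Jacobi map as input rather than as a formal consequence.
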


See Remark \ref{R:Jnotation} for a caution about the
notation $\mathsf J^{2n+1}_a(\mathsf X/\mathsf B)$, and see the notation and conventions below for a reminder on very general points. 
Conjecture \ref{CJ:GG} in the  algebraically motivated case  follows 
immediately from Theorem \ref{T:ANF-MK}(1), and in this way we obtain a short
proof of this case of the conjecture.  
Conjecture \ref{CJ:ANFMK} in the algebraically $F$-motivated case  follows
immediately from Theorem \ref{T:ANF-MK}(2).   In summary, we have\,:
\begin{coralpha}\label{C:ZL}
Let  $F\subseteq
\mathbb C$ be a subfield. 
The zero locus of an algebraically $F$-motivated 
normal function  is  algebraic and defined over $F$.
\end{coralpha}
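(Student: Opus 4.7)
The plan is to derive Corollary \ref{C:ZL} as a formal consequence of Theorem \ref{T:ANF-MK}(2), with no further geometric input. Given an algebraically $F$-motivated normal function $\nu_{\mathsf Z}:\mathsf B\to \mathsf J^{2n+1}(\mathsf X/\mathsf B)$, Theorem \ref{T:ANF-MK}(1) produces a factorization of $\nu_{\mathsf Z}$ through the relative algebraic subtorus $\mathsf J^{2n+1}_a(\mathsf X/\mathsf B)\subseteq \mathsf J^{2n+1}(\mathsf X/\mathsf B)$ as an algebraic morphism; Theorem \ref{T:ANF-MK}(2) upgrades this by ensuring that $\mathsf J^{2n+1}_a(\mathsf X/\mathsf B)\to \mathsf B$ and the factored morphism $\nu_{\mathsf Z}:\mathsf B\to \mathsf J^{2n+1}_a(\mathsf X/\mathsf B)$ are both defined over $F$.

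Next, I would reinterpret the zero locus scheme-theoretically. By definition, the zero locus of $\nu_{\mathsf Z}$ is the preimage under $\nu_{\mathsf Z}$ of the zero section $\mathsf 0_{\mathsf B}:\mathsf B\to \mathsf J^{2n+1}_a(\mathsf X/\mathsf B)$ of the relative commutative group scheme $\mathsf J^{2n+1}_a(\mathsf X/\mathsf B)\to \mathsf B$. Since the group scheme structure, including its identity section, is part of the data that Theorem \ref{T:ANF-MK}(2) provides over $F$, the image of $\mathsf 0_{\mathsf B}$ is an $F$-closed subscheme of $\mathsf J^{2n+1}_a(\mathsf X/\mathsf B)$. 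Pulling this closed $F$-subscheme back along the $F$-morphism $\nu_{\mathsf Z}$ yields an algebraic closed subscheme of $\mathsf B$ defined over $F$, and its underlying set is precisely the zero locus of $\nu_{\mathsf Z}$. This establishes the corollary.

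There is no genuine obstacle to overcome at this stage\,: all of the difficulty is concentrated in the proof of Theorem \ref{T:ANF-MK} itself, which constructs the algebraic subtorus $\mathsf J^{2n+1}_a(\mathsf X/\mathsf B)$, establishes the algebraicity of $\nu_{\mathsf Z}$, and descends both to the field of definition $F$. Once that theorem is in hand, Corollary \ref{C:ZL} requires only the elementary observation that the preimage of an $F$-closed subscheme under an $F$-morphism of $F$-schemes is again an $F$-closed subscheme. In particular, as emphasized in the introduction, this route does not rely on Conjecture \ref{CJ:GG} or on any asymptotic analysis of admissible normal functions\,: the algebraicity and $F$-rationality of the zero locus are inherited directly from those of the normal function.
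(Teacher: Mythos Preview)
Your proposal is correct and matches the paper's own approach: the paper simply states that Corollary~\ref{C:ZL} follows immediately from Theorem~\ref{T:ANF-MK}(2), and you have spelled out precisely this immediate deduction (zero locus as the preimage of the $F$-defined zero section under the $F$-morphism $\nu_{\mathsf Z}$). There is nothing to add.
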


We review some special easy cases of Theorem \ref{T:ANF-MK}, with an eye toward explaining why their generalization is not  immediate.
In the case of $n+1=1$, \emph{i.e.},  of $\mathsf{Pic}^0_{\mathsf X/\mathsf B}$, and
$n+1=\dim_{\mathsf B}\mathsf X$, \emph{i.e.},  of $\mathsf{Alb}_{\mathsf X/\mathsf B}$,
it is well known that algebraically $F$-motivated  normal functions
are algebraic and defined over $F$ (\emph{e.g.}, \cite[Thm.~VI.3.3]{FGA}, \cite[Def.~4.6
and Thm.~4.8]{kleimanPIC}).  
In the case where $\mathsf B$ is quasiprojective, $\mathsf X=\mathsf B\times
\mathsf Y$ for some smooth projective complex manifold $\mathsf Y$, and $\mathsf
f:\mathsf X\to \mathsf B$ is the first projection, part (1) of the theorem is
elementary by embedding $\mathsf B$ in a smooth complex projective manifold
$\bar {\mathsf B}$, extending the cycle class $\mathsf Z$ to a cycle class $\bar
{\mathsf Z}$ on $\bar{\mathsf B}\times \mathsf Y$, and obtaining a normal
function $\nu_{\bar {\mathsf Z}}:\bar{\mathsf B}\to \bar {\mathsf B}\times
\mathsf J^{2n+1}_a(\mathsf Y)$ that is a holomorphic map between complex
projective manifolds.  
From our work in  \cite{ACMVdmij}, one can then easily deduce (2) of the theorem
in this case, as well.   The difficulty in using the same strategy to prove part
(1) of the theorem in general is twofold. First,  the family $\mathsf f:\mathsf
X\to \mathsf B$ may not extend to a smooth family over $\bar {\mathsf B}$, in
which case it is difficult to know how to extend $\mathsf J^{2n+1}(\mathsf
X/\mathsf B)$ and $\nu_{\mathsf Z}$ to the boundary.  Second, even if one can
extend  $\mathsf f:\mathsf X\to \mathsf B$  to a  smooth family $\bar{\mathsf
f}:\bar {\mathsf X}\to\bar {\mathsf B}$, the geometric coniveau of the family
can jump along a countable union of algebraic subsets  of $\bar{\mathsf B}$,
 and
so there is no obvious algebraic target $\mathsf J^{2n+1}_a(\bar {\mathsf
X}/\bar{\mathsf B})$ for an extended normal function.  

One faces similar difficulties in trying to prove  Conjecture \ref{CJ:GG},  and
the approach taken  in   \cite{saitoArXiv08,BP-09-Annals, schnell-Inv-12}
overcomes these complications by constructing N\'eron models for the relative
intermediate Jacobians (see also \cite{GGK-comp-10}) that provide manageable
targets for  extending admissible normal functions.  
In the special case where $\dim \mathsf B=1$, Schnell and Kerr independently
communicated to us arguments using these techniques   to prove part (1) of 
the
theorem, up to replacing the normal function $\nu_{\mathsf Z}$ with $M\cdot
\nu_{\mathsf Z}$ for some integer $M$,  depending on $\mathsf Z$.  It appears
however that it would be difficult to extend these arguments to the case where
$\dim \mathsf B\ge 2$.  It also appears it would be difficult to use these
techniques to prove part (2) of the theorem regarding the field of definition, even in the case where $\dim
\mathsf B=1$.
\medskip

The starting point of our proof consists in showing that, for a smooth projective variety $X$ defined over a subfield $K\subseteq \cx$, the kernel of the Abel--Jacobi map restricted to algebraically trivial cycles defined over $K$ is independent of the choice of field embedding $K\subseteq \cx$. 
This is embodied in Corollary~\ref{Cor:fieldemb}\,; in fact, a
stronger result is proved in Proposition~\ref{P:DNF-K} where it is
shown that the distinguished model of \cite{ACMVdmij} does not depend
on a choice of field embedding.  The proof uses in an essential way
the fact proven in \cite{ACMVabtriv} that algebraically trivial cycles
defined over $K$ are parameterized by abelian varieties, and builds on
our previous work \cite{ACMVdmij}. Consequently, the relevant material of \cite{ACMVdmij} 
is reviewed in \S \ref{S:DMIJ-Pre}. An important consequence of Proposition~\ref{P:DNF-K} 
is that an algebraically $F$-motivated normal function vanishes at a very general point 
if and only if it vanishes on a Zariski open subset, and is therefore identically zero\,; see Example~\ref{E:NF-CCFam} and
 Remarks~\ref{R:Geom-Gen-Fib} and~\ref{R:FC-NF}.

In fact, the initial step of our strategy is to consider a very general fiber $\mathsf X_{\mathsf u}$ and, 
thanks to Proposition \ref{P:DNF-K}, to
descend the image of the Abel--Jacobi map
$\mathsf J^{2n+1}_a(\mathsf X_{\mathsf u})$ for this fiber to an abelian variety
over the generic point of   $\mathsf B$, which admits a natural section related to
the normal function. 
 We then spread this abelian variety and section to a
Zariski open subset of $\mathsf B$, all defined over $F$. In Theorem \ref{T:ANF-MK-LG}, we compare this abelian scheme together with the induced section to the analytic normal function.
 This is achieved through comparing the related variations of Hodge structures via an algebraic correspondence defined over $K$, provided by Theorem~\ref{T:DMIJ-0}.  
There is a technical point here, that the correspondence only identifies the integral Hodge structures, as well as our algebraic section and the analytic normal function,   up to an integer multiple $M$\,; in Theorem \ref{T:ANF-MK-LG} we show that the image of the   morphism of abelian varieties induced by the correspondence,  inside the Griffiths intermediate Jacobian, is in fact the spread of our distinguished model, and that the algebraic section and analytic normal function are identified.
 The final step is to
extend this to all of $\mathsf B$ (\S \ref{S:Ext-DM-DNF}).  
We extend the relative algebraic torus  over the generic points of 
codimension-$1$ boundary loci by using the good reduction of $\mathsf X$ and 
the N\'eron--Ogg--Shafarevich
criterion, 
and then extend over codimension-$2$ loci using the Faltings--Chai Extension
Theorem.
The normal function is handled separately at each step.  In short, rather than
having to worry about extending admissible normal functions to projective
compactifications in order to obtain algebraicity as a consequence of Chow's
theorem, we extend algebraic maps defined over $F$ on a Zariski open subset of
$\mathsf B$ to all of $\mathsf B$, and in this way  also manage to maintain
control over the field of definition.    

\medskip

In forthcoming work \cite{ACMVfunctorial} we will study the notion of regular  homomorphisms in the relative setting\,; Theorem~\ref{T:ANF-MK}  shows that the Abel--Jacobi map provides such a relative regular homomorphism.

Finally,
although our results are algebraic in nature and only concern algebraically trivial cycle classes, 
there are important instances of families of varieties for which homological and algebraic equivalence
of cycles in certain codimensions agree and for which the corresponding intermediate 
Jacobians are algebraic \cite{BlSr83}. For example, a direct application of Theorem \ref{T:ANF-MK} 
concerns codimension-2 cycles on uniruled threefolds\,:

\begin{coralpha}
Let $\mathsf f: \mathsf X\to \mathsf B$ be a smooth projective family of uniruled threefolds, 
defined over a field $F\subseteq \cx$, and let $\mathsf Z\in \operatorname{CH}^2(\mathsf X)$ 
be a cycle class defined over $F$ that is fiber-wise homologically trivial. Then the analytic 
 normal function $\nu_{{\mathsf Z}}$ is algebraic, and defined over $F$. 
 In particular, its zero-locus is an algebraic sub-variety of $\mathsf B$ defined over $F$.
\end{coralpha}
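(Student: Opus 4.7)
The plan is to reduce the statement to a direct application of Theorem~\ref{T:ANF-MK}(2), after upgrading the fiberwise homological triviality hypothesis to fiberwise algebraic triviality via a theorem of Bloch--Srinivas \cite{BlSr83}. Recall that for a smooth projective threefold $X$ whose group of zero-cycles is supported on a surface, Bloch--Srinivas show that the Griffiths group $\chow^2(X)_{hom}/\chow^2(X)_{alg}$ vanishes, so that $\chow^2(X)_{hom} = \chow^2(X)_{alg}$. Since any smooth projective uniruled threefold $X$ admits a dominant rational map from $Y \times \proj^1$ for some surface $Y$, its Chow group of zero-cycles is supported on a surface, and the Bloch--Srinivas vanishing applies.

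First, I would apply this fact fiber by fiber: for every $\mathsf b \in \mathsf B$ the fiber $\mathsf X_{\mathsf b}$ is a smooth projective uniruled threefold, hence $\chow^2(\mathsf X_{\mathsf b})_{hom} = \chow^2(\mathsf X_{\mathsf b})_{alg}$. By hypothesis the Gysin fiber $\mathsf Z_{\mathsf b} \in \chow^2(\mathsf X_{\mathsf b})$ is homologically trivial for every $\mathsf b \in \mathsf B$, so in fact $\mathsf Z_{\mathsf b}$ is algebraically trivial for every $\mathsf b \in \mathsf B$. Thus the cycle class $\mathsf Z \in \chow^2(\mathsf X) = \chow^{n+1}(\mathsf X)$ with $n=1$ fits precisely the hypotheses of Theorem~\ref{T:ANF-MK}.

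Next, Theorem~\ref{T:ANF-MK}(1) gives that $\nu_{\mathsf Z}: \mathsf B \to \mathsf J^{3}(\mathsf X/\mathsf B)$ factors through the relative algebraic subtorus $\mathsf J_a^{3}(\mathsf X/\mathsf B)$ and is an algebraic map. Since $\mathsf X$, $\mathsf B$, $\mathsf f$, and $\mathsf Z$ are all defined over $F$, Theorem~\ref{T:ANF-MK}(2) further gives that $\mathsf J_a^{3}(\mathsf X/\mathsf B) \to \mathsf B$ and the morphism $\nu_{\mathsf Z}$ are defined over $F$. This is exactly the algebraicity and field of definition claim of the corollary.

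Finally, for the last sentence, the zero locus of $\nu_{\mathsf Z}$ is the pullback of the zero section of the abelian scheme $\mathsf J_a^{3}(\mathsf X/\mathsf B) \to \mathsf B$ along the $F$-morphism $\nu_{\mathsf Z}: \mathsf B \to \mathsf J_a^{3}(\mathsf X/\mathsf B)$; both the zero section and $\nu_{\mathsf Z}$ are defined over $F$, so their fibered product over $\mathsf J_a^{3}(\mathsf X/\mathsf B)$ is an algebraic subvariety of $\mathsf B$ defined over $F$ (this is also the content of Corollary~\ref{C:ZL}). There is no substantive obstacle here: the only input beyond Theorem~\ref{T:ANF-MK} is the Bloch--Srinivas identification of homological and algebraic equivalence for codimension-$2$ cycles on uniruled threefolds, which feeds the uniruledness hypothesis into the algebraically $F$-motivated setup required by the main theorem.
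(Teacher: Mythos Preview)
Your proposal is correct and matches the paper's intended argument: the paper presents this corollary as ``a direct application of Theorem~\ref{T:ANF-MK}'' once one invokes \cite{BlSr83} to identify homological and algebraic equivalence for codimension-$2$ cycles on uniruled threefolds, and that is precisely what you do. Your added explanation of why the Bloch--Srinivas hypothesis (zero-cycles supported on a surface) holds for uniruled threefolds, and the explicit description of the zero locus as a fibered product, simply spell out details the paper leaves implicit.
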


In the opposite direction, as a specific example of a case where it is
not clear how to apply our results to  the $F$-motivated case of
Conjecture \ref{CJ:ANFMK}, one can consider families of Calabi--Yau
threefolds\,; specifically for very general quintic threefolds, Voisin
\cite[Thm.~2,3]{VoisinGriffiths} has shown that the image of the
Abel--Jacobi map on \emph{algebraically} trivial codimension-$2$ cycle
classes is trivial, while the image of the Abel--Jacobi map on
\emph{homologically} trivial codimension-$2$ cycle classes is a
countable abelian group of infinite rank.  In general, it seems to
us that it would be interesting to construct a canonical arithmetic
structure on the image of all homologically trivial cycles under the
Abel--Jacobi map.  Such a construction is a necessary first step for
extending our methods to normal functions motivated by homologically trivial cycles.

\subsection*{Acknowledgements} 
The second author would like to thank Alena Pirutka for a conversation 
regarding the algebraicity of motivated normal functions,  which led to this
investigation.  He would also like to thank Christian Schnell and Matt Kerr for
explaining the connection to their previous work, which was very helpful in
formulating our results.    We thank the referee for helpful suggestions.

\subsection*{Notation and conventions}
Let $X$ be a scheme of finite type over a field $F\subseteq \mathbb C$.
We denote by $X(F)$ the set of $F$-morphisms $\operatorname{Spec}F\to X$.  
We denote by $\mathsf X=X_{an}$ the associated complex analytic space.  After
identifying the sets  
\begin{align*}
\mathsf X&=\{\mathfrak p\in |X_{\mathbb C}|: \mathfrak p \text{ is closed in the
underlying topological space }  |X_{\mathbb C}|\}\\
&=\{x\in X_{\mathbb C}(\mathbb C): x(\operatorname{Spec}\mathbb C) \text{ is
closed in $|X_{\mathbb C}|$}\},
\end{align*} 
we say that $\mathsf x\in \mathsf X$ is \emph{$F$-very general} if the
corresponding morphism $x:\operatorname{Spec}\mathbb C\to X$ has image
a generic point of $|X|$.  If $F$ is countable, then so is the collection of
all closed algebraic subsets of~$\mathsf X$ that are defined
over $F$ and not equal to $\mathsf X$\,;  any $F$-very general point $\mathsf x$
is in the
complement of the union of these closed algebraic subsets.  If $\mathsf Y$ is merely a complex algebraic variety, then a very general point of $\mathsf Y$ is an $F$-very general point for some field of definition $F$ of $\mathsf Y$ which is of finite type over $\rat$.

A \emph{variety} over a field is a geometrically reduced separated scheme of
finite type over that field.  
Given a smooth projective variety  $X$ over a field $F\subseteq \mathbb C$ we
denote by $\operatorname{CH}^i(X)$ (resp.~$\operatorname{CH}^i(\mathsf X)$) the
Chow group of codimension-$i$ algebraic cycle classes on $X$ (resp.~$\mathsf
X$), and by $\operatorname{A}^i(X)$ (resp.~$\operatorname{A}^i(\mathsf X)$) 
the subgroup of algebraically trivial algebraic cycle classes on $X$ (resp.~$\mathsf
X$).

For the remainder of the paper, the domain of the
Abel--Jacobi map $\mathsf {AJ}:\operatorname{A}^{n+1}(\mathsf X)\to
\mathsf J^{2n+1}(\mathsf X)$ is the group of algebraically trivial algebraic
cycle classes.  We denote by $\mathsf J^{2n+1}_a(\mathsf X)$ the image
of this Abel--Jacobi map, and by $\mathsf i^{2n+1}_{a,\mathsf X}:\mathsf J^{2n+1}_a(\mathsf X)\to \mathsf
J^{2n+1}(\mathsf X)$ the natural inclusion.

\section{Distinguished models of intermediate Jacobians \\
	and distinguished normal functions} \label{S:DMIJ-Pre}

In this section we recall some results from \cite{ACMV14, ACMVdmij} regarding
descending intermediate Jacobians to a field of definition.

\subsection{Distinguished models of intermediate Jacobians}\label{S:DMIJ-rev}

We start by recalling the main result of \cite{ACMVdmij}. We note that while in \S \ref{S:DMIJ-rev}, \ref{S:DNF} we  work
over a field $K\subseteq \mathbb C$, starting from \S \ref{S:DM-DNF} we will
implement these results in the case where the field $K$ is the residue field of
the generic point of the integral base $B$ of a smooth projective family $f:X\to
B$, all defined over a field $F\subseteq \mathbb C$.

\begin{teo}[{Distinguished models \cite[Thm.~1]{ACMVdmij}}]\label{T:DMIJ-0}
Suppose $X$ is a smooth projective variety over a field $K\subseteq \cx$, with
associated complex analytic space $\mathsf X$, and
let $n$ be a nonnegative integer. Then $\mathsf J^{2n+1}_a(\mathsf X)$,  the
algebraic complex torus that is the image of the Abel--Jacobi map $\mathsf {AJ}
:
\operatorname{A}^{n+1}(\mathsf X) \to \mathsf J^{2n+1}(\mathsf X)$,  
admits a distinguished model $J^{2n+1}_{a,X/K}$ over $K$ such that the
Abel--Jacobi map is 
$\operatorname{Aut}(\cx/K)$-equivariant.
Moreover, there exist a
correspondence $\Gamma \in \operatorname{CH}^{\dim (J^{2n+1}_{a,X/K})+n}(
J^{2n+1}_{a,X/K}\times_K
X)$  and a positive integer $M$ such that 
the induced
morphism $\mathsf \Gamma_*:\mathsf J^{2n+1}_a(\mathsf X)\to \mathsf
J^{2n+1}(\mathsf X)$ is $M \cdot \mathsf i^{2n+1}_{a,\mathsf X}$\,; \emph{i.e.}, $M$ times the natural inclusion.
\end{teo}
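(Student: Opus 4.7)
The plan is to prove the theorem in three layers: establish the relevant structure over $\mathbb{C}$, descend it to $\bar K$, then to $K$, and finally package the result via a correspondence.

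First, I would record that the image $\mathsf J^{2n+1}_a(\mathsf X)$ of the Abel--Jacobi map on algebraically trivial cycles is an abelian variety, and that the restriction of $\mathsf{AJ}$ to $\operatorname{A}^{n+1}(\mathsf X)$ is a \emph{regular homomorphism} in the sense of Samuel/Murre. The former is classical (Lieberman): the natural polarization on $\mathsf J^{2n+1}(\mathsf X)$ restricts to a polarization on the subtorus spanned by the image of algebraically trivial cycles. The latter follows because any family of algebraically trivial cycles on $\mathsf X$ parametrized by a smooth connected variety $T$ induces, via fiberwise Abel--Jacobi, an algebraic morphism $T \to \mathsf J^{2n+1}_a(\mathsf X)$.

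Second, I would descend this picture to $K$ using the main theorem of \cite{ACMVabtriv}, namely that algebraically trivial cycles on $X_{\bar K}$ are parametrized by abelian varieties defined over $\bar K$. Combined with regularity of Abel--Jacobi, this yields a $\bar K$-model $J_{a,X/\bar K}$ of $\mathsf J^{2n+1}_a(\mathsf X)$ together with a $\bar K$-algebraic surjective regular homomorphism $\operatorname{A}^{n+1}(X_{\bar K}) \twoheadrightarrow J_{a,X/\bar K}(\bar K)$ that base changes to the complex Abel--Jacobi map. For the further descent to $K$, I would invoke Galois functoriality: $\operatorname{Gal}(\bar K/K)$ acts on $\operatorname{A}^{n+1}(X_{\bar K})$ by pullback of cycles, and the universality of the regular homomorphism --- combined with the $K$-rational parametrization of \cite{ACMVabtriv} --- forces a compatible action on $J_{a,X/\bar K}$ that is a descent datum. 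This produces the desired $K$-form $J^{2n+1}_{a,X/K}$, and the $\operatorname{Aut}(\mathbb{C}/K)$-equivariance of $\mathsf{AJ}$ follows by construction.

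Third, I would construct the correspondence $\Gamma$. The regular-homomorphism property guarantees, perhaps after replacing $J^{2n+1}_{a,X/K}$ by a generically finite cover of some degree $M$, the existence of a cycle $W$ on this cover times $X$ whose fibers are algebraically trivial cycles on $X$ and which induces the identity on the cover. Pushing forward to $J^{2n+1}_{a,X/K} \times_K X$ yields a cycle $\Gamma$ of codimension $\dim(J^{2n+1}_{a,X/K}) + n$, and a direct computation of the action on integral first homology shows that $\mathsf \Gamma_*$ sends $H_1(\mathsf J^{2n+1}_a(\mathsf X),\mathbb{Z})$ into $H_1(\mathsf J^{2n+1}(\mathsf X),\mathbb{Z}) = H^{2n+1}(\mathsf X,\mathbb{Z})/\operatorname{tors}$ as $M$ times the natural inclusion, which is the claim.

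The main obstacle, I expect, is the descent in the second step: over an algebraically closed field, the existence of algebraic representatives is well understood, but passage to a non-closed $K$ is only possible because \cite{ACMVabtriv} guarantees that $K$-rational cycles already exhaust algebraic triviality, so that the Galois-descent datum is canonically defined. Without this input, one obtains only a $\bar K$-model with no canonical $K$-structure, and the very statement of $\operatorname{Aut}(\mathbb{C}/K)$-equivariance would be vacuous.
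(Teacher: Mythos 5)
Your proposal takes a genuinely different route from what the paper records. The review in the paper (following \cite{ACMVdmij}) proceeds by presenting $\mathsf J^{2n+1}_a(\mathsf X)$ as a quotient of the Jacobian of a curve: one takes a smooth projective geometrically integral curve $C/K$ and a correspondence $\gamma\in\operatorname{CH}^{n+1}(C\times_K X)$ whose induced map $\gamma_*:\mathsf J(\mathsf C)\to \mathsf J^{2n+1}(\mathsf X)$ has image $\mathsf J^{2n+1}_a(\mathsf X)$, so that one obtains an extension $0\to\mathsf P\to\mathsf J(\mathsf C)\to\mathsf J^{2n+1}_a(\mathsf X)\to 0$. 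Descent of the abelian subvariety $\mathsf P$ to $K$ is then established concretely by identifying $\mathsf P[N]$ with the kernel of $\gamma_*:H^1(C_{\mathbb C},\mmu_N)\to H^{2n+1}(X_{\mathbb C},\mmu_N^{\otimes(n+1)})$, which is visibly $\operatorname{Aut}(\mathbb C/K)$-stable because $\gamma$ is defined over $K$; one then descends the quotient. By contrast, you bypass the curve entirely and work directly with the regular-homomorphism formalism, descending $\mathsf J^{2n+1}_a$ via a Galois descent datum coming from the universal property and the abelian-variety parametrization of \cite{ACMVabtriv}. The curve presentation also hands you the correspondence $\Gamma$ essentially for free (via $\gamma$ composed with a quasi-inverse of the projection $\mathsf J(\mathsf C)\to\mathsf J^{2n+1}_a(\mathsf X)$), whereas your $\Gamma$ must be manufactured from a universal family of cycles.

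There are two places where your argument is thinner than it should be. First, the assertion that ``the universality of the regular homomorphism forces a compatible action on $J_{a,X/\bar K}$ that is a descent datum'' is precisely the delicate part: one must show that the Galois conjugate of the Abel--Jacobi regular homomorphism is again a regular homomorphism, that universality gives a canonical isomorphism between the target and its Galois twist, that these isomorphisms satisfy the cocycle condition, and that the resulting datum is effective. These points are the actual content of the equivariant regular homomorphism machinery of \cite{ACMV14}, and stating them as ``Galois functoriality'' glosses over the step where the real work lives. Second, in constructing $\Gamma$, the phrase ``perhaps after replacing $J^{2n+1}_{a,X/K}$ by a generically finite cover of some degree $M$'' does not explain where the cover comes from, why it is defined over $K$, or why pushing a cycle forward along it produces a correspondence acting as $M$ times the inclusion on $H_1$; the integer $M$ should emerge as the exponent of an explicit $K$-isogeny (compare Lemma~\ref{L:Gamm-M}), not as an unspecified degree. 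The underlying plan is reasonable, but as written neither the descent datum nor the correspondence is actually produced.
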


\begin{rem}[Uniqueness of the distinguished model]\label{R:Uniq-DM}
By Chow's rigidity theorem,
an abelian variety $A/\mathbb C$ descends to  at most one model  defined over
$\bar K$.  
On the other hand, an abelian variety $A/\bar K$ may descend to  more than one
model   defined over $K$.  Nevertheless,  since $\mathsf
{AJ}:\operatorname{A}^{n+1}(\mathsf X)\to
\mathsf J_a^{2n+1}(\mathsf X)$ is surjective, 
the algebraic complex torus 
$\mathsf J^{2n+1}_a(\mathsf X)$ admits at most one structure of a variety over
$K$ such that $\mathsf {AJ}$ is 
$\operatorname{Aut}(\cx/K)$-equivariant.
More precisely, setting $J^{2n+1}_a(X_{\mathbb C})$ to be the abelian variety associated 
to the algebraic complex torus $\mathsf J^{2n+1}_a(\mathsf X)$, there an  abelian 
variety $J^{2n+1}_{a,X/K}$ which is unique up to unique isomorphism, such that there is an isomorphism 
$(J^{2n+1}_{a,X/K})_{\mathbb C}\to J^{2n+1}_a(X_{\mathbb C})$ such that the induced action 
of  $\operatorname{Aut}(\mathbb C/K)$ on $\mathsf J^{2n+1}_a(\mathsf X)$ 
makes the Abel--Jacobi map $\operatorname{Aut}(\mathbb C/K)$-equivariant.
 This is
the sense in which  $\mathsf J_a^{2n+1}(\mathsf X)$  admits a
\emph{distinguished model}
over~$K$.
\end{rem}

In \cite[Thm.~1]{ACMVdmij} we show that the correspondence $\Gamma$ in 
Theorem \ref{T:DMIJ-0} induces a morphism
 of complex tori  $\mathsf \Gamma_* : \mathsf J^{2n+1}_a(\mathsf X)\to \mathsf J^{2n+1}(\mathsf X)$
 with image $\mathsf J^{2n+1}_a(\mathsf X)$.  In fact, this morphism
 respects $K$-structures\,:

 \begin{lem}
   \label{L:Gamm-M}
In the situation of Theorem \ref{T:DMIJ-0}, the morphism $\mathsf \Gamma_*:\mathsf J^{2n+1}_a(\mathsf X)\to \mathsf J^{2n+1}_a(\mathsf X)$ 
is induced by an isogeny $\psi:J^{2n+1}_{a,X/K}\to J^{2n+1}_{a,X/K}$
over $K$.
 \end{lem}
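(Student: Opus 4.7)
The plan is to exploit the identity $\mathsf{\Gamma}_* = M \cdot \mathsf{i}^{2n+1}_{a,\mathsf X}$ provided by Theorem~\ref{T:DMIJ-0} and simply identify the claimed isogeny as multiplication by $M$ on the distinguished model. The whole content of the lemma reduces to (a) showing the map factors through $\mathsf J^{2n+1}_a(\mathsf X)$, (b) computing the resulting self-map analytically, and (c) recognizing this self-map as being the complex realization of a $K$-morphism on $J^{2n+1}_{a,X/K}$.

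First I would observe that since $\mathsf{\Gamma}_* = M \cdot \mathsf{i}^{2n+1}_{a,\mathsf X}$ and $\mathsf{i}^{2n+1}_{a,\mathsf X}$ is, by definition, the inclusion of $\mathsf J^{2n+1}_a(\mathsf X)$ into $\mathsf J^{2n+1}(\mathsf X)$, the image of $\mathsf{\Gamma}_*$ is contained in $\mathsf J^{2n+1}_a(\mathsf X)$. Hence $\mathsf{\Gamma}_*$ factors (uniquely) through a morphism of complex tori
\[
\bar{\mathsf \Gamma}_*:\mathsf J^{2n+1}_a(\mathsf X)\longrightarrow \mathsf J^{2n+1}_a(\mathsf X),
\]
and the equality $\mathsf{i}^{2n+1}_{a,\mathsf X}\circ \bar{\mathsf \Gamma}_* = M\cdot \mathsf{i}^{2n+1}_{a,\mathsf X}$, together with the injectivity of $\mathsf{i}^{2n+1}_{a,\mathsf X}$, forces $\bar{\mathsf \Gamma}_*$ to be multiplication by $M$ on $\mathsf J^{2n+1}_a(\mathsf X)$.

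Next I would transport this identification to the distinguished model: by Theorem~\ref{T:DMIJ-0} (and the uniqueness statement in Remark~\ref{R:Uniq-DM}) we have a canonical identification $\mathsf J^{2n+1}_a(\mathsf X) = J^{2n+1}_{a,X/K}(\mathbb C)$, under which the multiplication-by-$M$ endomorphism of the complex torus is the analytification of the $K$-isogeny $[M]\colon J^{2n+1}_{a,X/K}\to J^{2n+1}_{a,X/K}$. Setting $\psi := [M]$, which is an isogeny of degree $M^{2g}$ with $g = \dim J^{2n+1}_{a,X/K}$, gives the desired $K$-rational model of $\mathsf{\Gamma}_*$.

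There is no real obstacle here; the only thing to watch is not to confuse the cycle-theoretic construction of $\mathsf{\Gamma}_*$ (which a priori is only a holomorphic map of complex tori) with its $K$-rationality. One could alternatively argue more directly that the correspondence $\Gamma$, being defined over $K$, induces a $K$-morphism $J^{2n+1}_{a,X/K}\to J^{2n+1}_{a,X/K}$ by the universal property of the distinguished model as in \cite{ACMVdmij}, whose complex realization must agree with $\bar{\mathsf \Gamma}_* = [M]$ and hence is itself $[M]$; but the shortcut above is all that is needed.
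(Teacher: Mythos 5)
Your argument assumes from the start that $\mathsf\Gamma_*$ equals $M\cdot\mathsf i^{2n+1}_{a,\mathsf X}$, reading this off the ``Moreover'' clause of Theorem~\ref{T:DMIJ-0}. But that clause is precisely what the paper derives \emph{from} Lemma~\ref{L:Gamm-M}, not an input available to it: the text just above the lemma explains that \cite[Thm.~1]{ACMVdmij} only gives a correspondence $\Gamma$ whose induced map $\mathsf\Gamma_*$ has image $\mathsf J^{2n+1}_a(\mathsf X)$, with no claim that this map is an integer multiple of the inclusion. With only that in hand, your factorization through a self-map of $\mathsf J^{2n+1}_a(\mathsf X)$ is still fine, but you cannot conclude it equals $[M]$, and in particular the $K$-rationality of the resulting endomorphism is no longer automatic. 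Your proposal therefore begs the question.

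The genuine content of the lemma, which your argument skips, is showing that a map of complex tori that a priori is merely holomorphic is in fact $\operatorname{Aut}(\cx/K)$-equivariant and so descends. The paper does this by identifying the action of $\mathsf\Gamma_*$ on $N$-torsion with the correspondence-induced map $\Gamma_*\colon H^1(J^{2n+1}_a(X_\cx),\mmu_N)\to H^1(J^{2n+1}_a(X_\cx),\mmu_N)\subseteq H^{2n+1}(X_\cx,\mmu_N^{\otimes(n+1)})$; this map is $\operatorname{Aut}(\cx/K)$-equivariant because $\Gamma$ is a cycle on a product of $K$-varieties. Density of torsion then yields equivariance of $\mathsf\Gamma_*$ itself, hence descent to a $K$-isogeny $\psi$. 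Only at that point does $M$ get defined (as the exponent of $\psi$), and the correspondence achieving $M\cdot\mathsf i^{2n+1}_{a,\mathsf X}$ is not the original $\Gamma$ but $\Gamma'=\Gamma_{\tilde\psi}\circ\Gamma$ for a complementary isogeny $\tilde\psi$ with $\tilde\psi\circ\psi=[M]$. In short, the implication runs the opposite way from what you wrote: one must first prove the $K$-rationality via the torsion/\'etale-cohomology comparison, and the equality ``$\mathsf\Gamma'_*=M\cdot\mathsf i^{2n+1}_{a,\mathsf X}$'' is the output, not the input.
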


 \begin{proof}
   It suffices to show that $\mathsf \Gamma_*$ is
   $\aut(\cx/K)$-equivariant on torsion.
This is achieved  by  identifying the map on $N$-torsion with the map $\Gamma_*:H^1(J^{2n+1}_a(X_{\mathbb C}),\mmu_{N})\to H^1(J^{2n+1}_a(X_{\mathbb C}),\mmu_{N})\subseteq  H^{2n+1}(X_{\mathbb C},\mmu_{N}^{\otimes (n+1)})$ (similar to \cite[(2.3)]{ACMVdmij}).  
   Let $M$ be the exponent of $\psi$, and let $\tilde \psi: J^{2n+1}_{a,X/K}\to J^{2n+1}_{a,X/K}$ 
   be such that $\tilde \psi \circ \psi=M$.   With $\Gamma_{\tilde \psi}$ the correspondence 
   associated to the morphism $\tilde \psi$, let $\Gamma'=\Gamma_{\tilde \psi}\circ \Gamma$.  
   It follows that the induced morphism $\mathsf \Gamma'_*:\mathsf J^{2n+1}_a(\mathsf X)\to \mathsf J^{2n+1}(\mathsf X)$ 
   has image $\mathsf J^{2n+1}_a(\mathsf X)$, and is given as $M \cdot
   \mathsf i^{2n+1}_{a,\mathsf X}$\,; \emph{i.e.}, $M$ times the natural inclusion. 
 \end{proof}

\begin{rem}[Extensions $K\subseteq L\subseteq \mathbb C$]\label{R:DM-L/K}
In the notation of the theorem, suppose  we have an intermediate field
extension $K\subseteq L\subseteq \mathbb C$.  Then the base change
$(J^{2n+1}_{a,X/K})_L$ is the distinguished model for $X_L$.  Indeed,
the distinguished model over $L$ is determined uniquely by the fact
that the Abel--Jacobi map for $\mathsf X$ is
$\operatorname{Aut}(\mathbb C/L)$-equivariant\,; but if the Abel--Jacobi
map is $\operatorname{Aut}(\mathbb C/K)$-equivariant with respect to
the $K$-structure on $J^{2n+1}_{a,X/K}$, then it is
$\operatorname{Aut}(\mathbb C/L)$-equivariant for the $L$-structure on
$(J^{2n+1}_{a,X/K})_L$.   
\end{rem}

\subsection{Distinguished normal functions}\label{S:DNF}
In \cite{ACMV14} we established
some results regarding equivariant regular homomorphisms.  In this section we
recall the consequences of that work in the context of normal functions and the
distinguished model.

Let $X$ be smooth projective variety over $K\subseteq \mathbb C$. 
Given $Z\in \operatorname{CH}^{n+1}(X)$ with the base change $Z_{\mathbb C}$
algebraically trivial, and $\sigma\in \operatorname{Aut}(\mathbb C/K)$, 
we showed in  \cite{ACMV14} that the following diagram commutes\,:
$$
\xymatrix@R=1.3em{
(\operatorname{Spec}\mathbb C)_\cx(\mathbb C)\ar[r]^{w_{ Z_{\mathbb C}}}
\ar@{=}[d]&\operatorname{A}^{n+1}(X_{\mathbb C})\ar@{->}[r]^<>(0.5){AJ}
\ar[d]^<>(0.5){\sigma^*}&J^{2n+1}_a(X_{\mathbb C})(\mathbb C)
\ar[d]^<>(0.5){\sigma(\mathbb C)}\\
(\operatorname{Spec}\mathbb C)_\cx(\mathbb C)\ar[r]^{w_{ Z_{\mathbb
C}}}&\operatorname{A}^{n+1}(X_{\mathbb
C})\ar@{->}[r]^<>(0.5){AJ}&J^{2n+1}_a(X_{\mathbb C})(\mathbb C)\\
}
$$
where $(\operatorname{Spec}\mathbb C)_{\mathbb C}(\mathbb C)=\{\operatorname{Id}_{\mathbb
C}\}$, and $w_{Z_{\mathbb C}}(\operatorname{Id}_{\mathbb C})=Z_{\mathbb C}$.
Indeed the right hand side is the precise meaning of the statement in  Theorem
\ref{T:DMIJ-0} that the Abel--Jacobi map  is $\operatorname{Aut}(\mathbb
C/K)$-equivariant, while the left hand side is elementary (see
\cite[Rem.~4.3]{ACMV14} for more on this).
This corresponds to the commutativity
of the diagram of sets\,:
$$
\xymatrix@R=1.2em{
(\operatorname{Spec}\mathbb C)_{an}\ar[r]^{w_{ \mathsf Z}}
\ar@{=}[d]&\operatorname{A}^{n+1}(\mathsf X)\ar@{->>}[r]^<>(0.5){\mathsf {AJ}}
\ar[d]^<>(0.5){\sigma^*}&\mathsf J^{2n+1}_a(\mathsf X)\ar[d]^<>(0.5){\sigma}\\
(\operatorname{Spec}\mathbb C)_{an}\ar[r]^{w_{ \mathsf
Z}}&\operatorname{A}^{n+1}(\mathsf X)\ar@{->>}[r]^<>(0.5){\mathsf {AJ}}&\mathsf
J^{2n+1}_a(\mathsf X)\\
}
$$
where $w_{\mathsf Z}((\operatorname{Spec}\mathbb C)_{an})=\mathsf Z$, the
complex analytic cycle class associated to $Z$, and   $\sigma:\mathsf J^{2n+1}_a(\mathsf X)\to \mathsf J^{2n+1}_a(\mathsf X) $  is the map of sets induced by $\sigma(\mathbb C)$ in the previous diagram.   
Note that $\mathsf {AJ}\circ w_{\mathsf Z}=\nu_{\mathsf Z}$, the normal function
associated to $\mathsf Z$.
As mentioned  in \cite[Rem.~4.3]{ACMV14}, the commutativity of the diagrams
above implies that $AJ\circ w_{Z_{\mathbb C}}$, and hence $\nu _{\mathsf Z}$,
descend to $K$ to give a morphism 
\begin{equation}\label{E:Def-DNF}
\delta_Z:\operatorname{Spec}K\to J^{2n+1}_{a,X/K}.
\end{equation}
We call this the \emph{distinguished normal function} associated to $Z$.

\begin{rem}[Uniqueness of the distinguished normal function] \label{R:Uni-DNF}
 The distinguished normal function is unique in the sense that given the  
 distinguished model $J^{2n+1}_{a,X/K}$ (unique up to unique isomorphism 
 by Remark \ref{R:Uniq-DM}), there is a unique morphism $\delta_Z:\operatorname{Spec}K\to J^{2n+1}_{a,X/K}$ 
 such that $(\delta_Z)_{an}:(\operatorname{Spec}\mathbb C)_{an}\to \mathsf J^{2n+1}_a(\mathsf X)$ 
 is equal to the analytic normal function $\nu_{\mathsf Z}$.   
\end{rem}

\begin{rem}[Extensions $K\subseteq L\subseteq \mathbb C$]\label{R:DNF-L/K}
In the notation of Theorem \ref{T:DMIJ-0}, suppose  we have an intermediate
field extension $K\subseteq L\subseteq \mathbb C$. 
In light of Remark \ref{R:DM-L/K}, we have a fibered product diagram
$$
\xymatrix@R=1.5em{
J^{2n+1}_{a,X_L/L} \ar[d] \ar[r]& J^{2n+1}_{a,X/K} \ar[d]\\
\operatorname{Spec} L \ar[r] \ar@/^1pc/[u]^{\delta_{Z_L}}& \operatorname{Spec} K
\ar@/_1pc/[u]_{\delta_Z}
}
$$
\end{rem}

\subsection{Review of the construction of the distinguished model}
Because it will be relevant later, we recall the construction of the distinguished model $J^{2n+1}_{a,X/K}$ from \cite{ACMVdmij}.  
The starting point is \cite[Prop.~1.1]{ACMVdmij}, which provides a smooth projective
 geometrically integral curve $C/K$ (admitting a $K$-point)    and a correspondence $\gamma \in
  \operatorname{CH}^{n+1}(C\times_K
  X)$  such that 
    the induced morphism of complex tori
 $
\gamma_*: \mathsf J(\mathsf C)\to \mathsf J^{2n+1}(\mathsf X)
$
has image $\mathsf J^{2n+1}_a(\mathsf X)$.  We thus obtain a short exact 
sequence of algebraic compact complex analytic groups $  0\to  \mathsf P \to \mathsf J(\mathsf C) \to  
\mathsf J^{2n+1}_a(\mathsf X) \to 0$,
where   $\mathsf P$  is defined to be the  kernel.

The next step is to show that $\mathsf P$ descends to $K$. 
For this it suffices to show that for every natural number $N$, 
the $N$-torsion $\mathsf P[N]$ is preserved by $\operatorname{Aut}(\mathbb C/K)$ 
(since torsion is dense in any sub-group scheme
of an abelian variety\,; see \emph{e.g.}, \cite[Lem.~2.3]{ACMVdmij}).   
For this one shows that  $\mathsf P[N]$  is equal to the kernel of the morphism 
$\gamma_*: H^1(C_{\mathbb C},\mmu_{N}) \to  H^{2n+1}(X_{\mathbb C},\mmu_{N}^{\otimes (n+1)})$ 
(see  \cite[(2.3)]{ACMVdmij}), which is  equivariant as it is induced by a correspondence defined over $K$.  
Thus $\mathsf P[N]$ is preserved by $\operatorname{Aut}(\mathbb C/K)$, so that $\mathsf P$ 
descends to a group scheme $P/K$, and consequently  $\mathsf J^{2n+1}_a(\mathsf X)$ 
descends to a model $J^{2n+1}_{a,X/K}$ over $K$, as well.   This is the distinguished model.

\begin{rem} \label{R:DM-ind-C-gg} 
We reiterate here that the distinguished model is unique  up to unique isomorphism 
(see Remark \ref{R:Uniq-DM}), so that  $J^{2n+1}_{a,X/K}$ is in fact independent of 
the curve $C$ and the correspondence $\gamma$ used in the construction. 
  In other words,   given any 
smooth projective
 geometrically integral curve $C'/K$ (admitting a $K$-point)   and a correspondence $\gamma' \in
  \operatorname{CH}^{n+1}(C'\times_K
  X)$  such that 
   the
  induced morphism of
complex tori $\gamma'_*:\mathsf J(\mathsf C')\to \mathsf J^{2n+1}(\mathsf X)$  defines 
a short exact sequence of algebraic compact complex analytic groups $  0\to  \mathsf P' \to \mathsf J(\mathsf C') \to  
\mathsf J^{2n+1}_a(\mathsf X) \to 0$,
the descent datum on $C'_\cx$ defines $J^{2n+1}_{a,X/K}$.
\end{rem}

\section{Changing the embedding $K\subseteq \mathbb C$}
In this section we show that if $K$ is a field of finite transcendence degree over $\mathbb Q$, 
then up to  isomorphism, the distinguished model and distinguished normal function do not depend 
on the embedding $K\subseteq \mathbb C$.  An important consequence is that the normal function 
associated to a fiber-wise algebraically trivial cycle defined over a field $F$ of finite transcendence 
degree over $\mathbb Q$ (\emph{i.e.}, an algebraically $F$-motivated normal function in our terminology) 
vanishes at an $F$-very general point if and only if it vanishes at all $F$-very general points\,; 
see Example~\ref{E:NF-CCFam} and Remark~\ref{R:FC-NF}.

\subsection{The distinguished normal function is independent of the field embedding}
The following proposition complements, in particular, Theorem \ref{T:DMIJ-0} by showing that the distinguished model does not depend on the choice of an embedding $K\subseteq \cx$.

\begin{pro}\label{P:DNF-K}
Let $X$ be a smooth projective variety over  a field $K$ 
of finite
transcendence degree over~$\mathbb Q$,  let $n$ be a nonnegative integer, 
and let $Z\in \operatorname{A}^{n+1}(X)$ be an algebraically trivial
cycle class.  
Let $b_1 ,b_2:K\hookrightarrow \mathbb C$ be two inclusions of fields,
let $L_i =b_i(K)$, and denote by 
$\sigma:\mathbb C\to \mathbb C$ an automorphism inducing a commutative diagram
of
field homomorphisms
\begin{equation*}\label{E:fld-Iso}
\xymatrix@R=.4em{
&L_1\ar@{^(->}[r] \ar[dd]^{b_2b_1^{-1}}&\mathbb C \ar@{-->}[dd]^\sigma\\
K\ar[ru]^{b_1}_{\sim} \ar[rd]_{b_2}^{\sim}&&\\
&L_2\ar@{^(->}[r]&\mathbb C \\
}
\end{equation*}
which exists due to the assumption that $K$ is of finite transcendence
degree over $\mathbb Q$.  

For $i = 1,2$ let 
$X_{b_i}$  be the base change of $X$ over $b_i:\operatorname{Spec}\mathbb C\to \operatorname{Spec}K$, 
with associated complex analytic space $\mathsf X_{\mathsf b_i}$, let $J^{2n+1}_{a,X_{L_i/L_i}}$  be 
the distinguished model of $\mathsf J^{2n+1}_a(\mathsf X_{\mathsf b_i})$  over $L_i$, and let $\delta_{Z_{L_i}}:\operatorname{Spec}L_i\to 
J^{2n+1}_{a,X_{L_i/L_i}}$  be the distinguished normal function.
Let $ J^{2n+1}_a( X_{ b_i})$  be the complex abelian variety
associated to $\mathsf J^{2n+1}_a(\mathsf X_{\mathsf b_i})$.

Then $J^{2n+1}_{a,X_{L_2}/L_2}$ is the pullback of
$J^{2n+1}_{a,X_{L_1}/L_1}$ by $b_1\inv b_2:\operatorname{Spec}L_2\to
\operatorname{Spec}L_1$, and there is a commutative  fibered product diagram 
\begin{equation}\label{E:Ch-Em-Pr}
\xymatrix@R=.8em{
J^{2n+1}_{a,X/K}\ar@{<-}[rr] \ar[dd] \ar@{<-}[rd]&&J^{2n+1}_{a,X_{L_1}/L_1}\ar@{-}[d]  \ar@{<-}[rr]&&J^{2n+1}_a(X_{b_1}) \ar[dd]\\
&J^{2n+1}_{a,X_{L_2/L_2}} \ar[dd] \ar[ru] \ar@{<-}[rr]&\ar[d]&J^{2n+1}_a(X_{b_2}) \ar[dd] \ar[ru]&\\
\operatorname{Spec}K \ar@/^1pc/[uu]^{\delta_Z} &\ar[l]&\operatorname{Spec}L_1   \ar@{-}[l]_<>(0.5){b_1} \ar@/^1pc/[uu]^<>(0.3){\delta_{Z_{L_1}}}  \ar@{<-}[r]&\ar@{-}[r]&\operatorname{Spec}\mathbb C\\
&\operatorname{Spec}L_2 \ar[lu]^{b_2} \ar@/^1pc/[uu]^<>(0.8){\delta_{Z_{L_2}}} \ar[ru] \ar@{<-}[rr]&&\operatorname{Spec}\mathbb C \ar[ru]_{\sigma}&\\
}
\end{equation}
where $J^{2n+1}_{a,X/K}$ and $\delta_Z:\operatorname{Spec}K\to J^{2n+1}_{a,X/K}$ are defined from the rest of the diagram via fibered product.
\end{pro}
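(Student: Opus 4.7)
The strategy is to revisit the construction of the distinguished model recalled in \S\ref{S:DMIJ-Pre} and observe that, although that construction is phrased using the complex analytification attached to a chosen embedding $K\hookrightarrow\mathbb{C}$, the resulting $K$-descent is governed by Galois-equivariant data in $\mmu_N$-cohomology which lives purely over $K$. Consequently the $K$-structures extracted via the two embeddings $b_1$ and $b_2$ can be canonically identified.

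To implement this, the plan is first to apply \cite[Prop.~1.1]{ACMVdmij} to $X/K$ to produce a single smooth projective geometrically integral curve $C/K$ (with a $K$-point) together with a correspondence $\gamma\in\operatorname{CH}^{n+1}(C\times_K X)$ for which the induced maps $\gamma_{b_i,*}\colon\mathsf{J}(\mathsf{C}_{\mathsf{b}_i})\to\mathsf{J}^{2n+1}(\mathsf{X}_{\mathsf{b}_i})$ have image $\mathsf{J}^{2n+1}_a(\mathsf{X}_{\mathsf{b}_i})$ for both $i=1,2$; a single choice of $(C,\gamma)$ suffices for both embeddings because the image of $\gamma_{b_i,*}$ is the image of the Abel--Jacobi map on algebraically trivial cycles, and algebraic triviality is preserved by the field automorphism $\sigma$. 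Base changing along $b_i\colon K\to L_i$ yields $\gamma_{L_i}\in\operatorname{CH}^{n+1}(C_{L_i}\times_{L_i} X_{L_i})$, and the analysis recalled in \S\ref{S:DMIJ-Pre} realizes the distinguished model over $L_i$ as the quotient $J^{2n+1}_{a,X_{L_i}/L_i}=J(C_{L_i})/P_{L_i}$, where $P_{L_i}$ is the $L_i$-subgroup scheme of $J(C_{L_i})$ whose $N$-torsion, for each $N$, is the kernel of the map on $\mmu_N$-cohomology induced by $\gamma_{L_i}$.

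The crucial observation is that this $\mmu_N$-cohomology kernel is the pullback along $b_i$ of the corresponding kernel over $\bar{K}$; hence $P_{L_i}$ is the pullback along $b_i$ of a single $K$-subgroup scheme $P\subseteq J(C)$, and the quotient $J^{2n+1}_{a,X/K}:=J(C)/P$ is a $K$-abelian variety whose base change along $b_i$ satisfies, by construction, the characterizing property of the distinguished model over $L_i$ (that the Abel--Jacobi map be $\operatorname{Aut}(\mathbb{C}/L_i)$-equivariant for the resulting $L_i$-structure). The uniqueness statement of Remark~\ref{R:Uniq-DM} applied over $L_i$ then produces canonical identifications $(J^{2n+1}_{a,X/K})_{L_i}\cong J^{2n+1}_{a,X_{L_i}/L_i}$, and Remark~\ref{R:DM-ind-C-gg} applied over $L_i$ ensures that the $K$-model so obtained is independent of the auxiliary choice of $(C,\gamma)$. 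The commutativity of the outer squares of~\eqref{E:Ch-Em-Pr}, the only part genuinely involving $\sigma$, is then forced by the $\operatorname{Aut}(\mathbb{C}/K)$-equivariance of the Abel--Jacobi map applied to $\sigma\in\operatorname{Aut}(\mathbb{C}/K)$. The distinguished normal function is handled by the same mechanism: each $\delta_{Z_{L_i}}\colon\operatorname{Spec} L_i\to J^{2n+1}_{a,X_{L_i}/L_i}$ is uniquely determined by analytifying to $\nu_{\mathsf{Z}_{\mathsf{b}_i}}$ (Remark~\ref{R:Uni-DNF}), and since $Z$ is defined over $K$ the base-change compatibility of Remark~\ref{R:DNF-L/K} forces the $\delta_{Z_{L_i}}$ to coincide with the pullbacks along $b_i$ of a single $K$-section $\delta_Z$ of $J^{2n+1}_{a,X/K}$.

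The main obstacle is to verify rigorously that the analytic descent of $\mathsf{P}$ carried out in \S\ref{S:DMIJ-Pre} really is given by the $\mmu_N$-cohomology formula used above, i.e.\ that $P[N]$ is genuinely cut out by \'etale-cohomological data pulled back from $K$ rather than only from the chosen embedding $K\hookrightarrow\mathbb{C}$. Once this identification is in place, the remainder of the argument is a diagram-chase assembling the base changes along $b_1$ and $b_2$ into the fibered product structure displayed in~\eqref{E:Ch-Em-Pr}.
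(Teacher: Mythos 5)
Your Step 1 argument for the distinguished models tracks the paper's Step 1 faithfully: a single $(C,\gamma)$ over $K$ works for both embeddings, $P_i[N]$ is identified with the kernel of the map on $\mmu_N$-cohomology, and comparison of these kernels under $\sigma^*$ gives $P_{1,L_2}=P_2$, hence $(J^{2n+1}_{a,X_{L_1}/L_1})_{L_2}=J^{2n+1}_{a,X_{L_2}/L_2}$. Your worry at the end about whether $P[N]$ is really cut out by \'etale-cohomological data is addressed by \cite[(2.3)]{ACMVdmij}, exactly as you suspect; that part is not the obstacle.

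The gap is in the treatment of the distinguished normal function, which is the genuine content of Step 2. You write that commutativity of the outer squares of \eqref{E:Ch-Em-Pr} is ``forced by the $\operatorname{Aut}(\mathbb{C}/K)$-equivariance of the Abel--Jacobi map applied to $\sigma\in\operatorname{Aut}(\mathbb{C}/K)$.'' But $\sigma$ is \emph{not} an element of $\operatorname{Aut}(\mathbb{C}/L_1)$ (nor of $\operatorname{Aut}(\mathbb{C}/L_2)$): by construction $\sigma$ carries $L_1$ to $L_2$ via $b_2 b_1^{-1}$, so it does not fix the relevant subfield pointwise. The equivariance established in Theorem~\ref{T:DMIJ-0} only governs automorphisms fixing the chosen embedded copy of $K$, and Remark~\ref{R:DNF-L/K} only governs further extensions $K\subseteq L\subseteq \mathbb{C}$ for a \emph{fixed} embedding; neither applies to an automorphism that moves $L_1$ to $L_2$. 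Invoking them here amounts to assuming what must be proved, namely the identity $\sigma\,AJ(Z_{b_1})=AJ(Z_{b_2})$.

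What is missing is the argument that actually establishes this identity. The paper reduces it to commutativity of the square comparing $AJ$ for $X_{b_1}$ and $X_{b_2}$ under $\sigma$, proves this first on torsion (via the Bloch map and the $\mmu_N$-cohomology comparison, which is legitimate since the Bloch map is functorial in field automorphisms), and then --- crucially --- uses \cite[Thm.~1]{ACMVabtriv} to write $Z=\Xi_p-\Xi_0$ for an abelian variety $A/K$, a $K$-point $p\in A(K)$, and a correspondence $\Xi\in\operatorname{CH}^{n+1}(A\times_K X)$. The correspondence induces homomorphisms $\psi_{\Xi,i}\colon A_{b_i}\to J^{2n+1}_a(X_{b_i})$ that a priori need not commute with $\sigma$; but they do commute on torsion by the Bloch-map step, and since torsion is dense they commute everywhere, giving $\sigma\,AJ(Z_{b_1})=AJ(Z_{b_2})$. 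This step --- parameterizing the algebraically trivial cycle over $K$ and bootstrapping from torsion to the whole abelian variety --- is the substance of the proposition and does not appear in your proposal.
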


\begin{rem}\label{R:DNF-K}
Proposition \ref{P:DNF-K} allows one to define  \emph{the distinguished model} 
$J^{2n+1}_{a,X/K}$ of the image of the Abel--Jacobi map, and  
\emph{the distinguished normal function} $\delta_Z$ associated to 
a cycle class $Z\in\operatorname{CH}^{n+1}(X)$, without first  needing to 
specify a particular inclusion $K\hookrightarrow \mathbb C$.  
\end{rem}

\begin{proof} From the diagram  \eqref{E:Ch-Em-Pr} it is clear that it suffices to establish 
	that there is a commutative fibered product diagram
\begin{equation}\label{E:Ch-Em-Pr-1}
\xymatrix@R=2em{
J^{2n+1}_{a,X_{L_2}/L_2}\ar[r] \ar[d]& J^{2n+1}_{a,X_{L_1}/L_1} \ar[d]\\
\operatorname{Spec}L_2 \ar[r]^{b_1^{-1}b_2} \ar@/^1pc/[u]^<>(0.5){\delta_{Z_{L_2}}} & \operatorname{Spec}L_1 \ar@/_1pc/[u]_<>(0.5){\delta_{Z_{L_1}}} 
}
\end{equation}
\emph{i.e.}, it is enough to focus on the sub-diagram that is the left hand face of the cube in diagram \eqref{E:Ch-Em-Pr}.
 We break the proof into two parts.  First we establish the result for the distinguished models, 
 and second for the distinguished normal functions.

\vskip .2 cm \noindent \emph{Step 1\,: The distinguished models}. 
Let $C$ be a 
 geometrically integral curve over $K$ (admitting a $K$-point)   and let $\gamma \in
  \operatorname{CH}^{n+1}(C\times_{K}
  X)$  be a correspondence  such that, for $i=1,2$,   the
  induced morphisms of complex tori
${\gamma_i}_*: \mathsf J(\mathsf C_i) \rightarrow
\mathsf J^{2n+1}(\mathsf X_{i})$
have respective images equal to $J_a^{2n+1}(\mathsf X_{i})$ (\cite[Prop.~1.1]{ACMVdmij}). Here, 
 $\mathsf X_i$ and $\mathsf C_i$ are the complex
analytic spaces associated to the pull backs of
$X$ and  $C$ to $\operatorname{Spec}\mathbb C$ via the given inclusions $b_i : K \hookrightarrow
\mathbb C$. 
Thus, for $i = 1,2$,
we obtain short exact sequences
$$
\xymatrix@R=.01em{
  0\ar[r]& \mathsf P_i \ar[r]& \mathsf J(\mathsf C_i) \ar@{->}[r]^<>(0.5){}&  
\mathsf J^{2n+1}_a(\mathsf X_i) \ar[r] & 0
}
$$
where $\mathsf P_i$ is defined to be the kernel of the morphism of
complex tori induced by $\gamma$. Moreover, we have seen in Remark
\ref{R:DM-ind-C-gg} that  $\mathsf P_i$  descends to an abelian scheme
$P_i$ over $L_i$.  This gives short exact sequences
$$
\xymatrix@R=.01em{
  0\ar[r]& P_i \ar[r]& J(C_{L_i}) \ar@{->}[r]^<>(0.5){}&  
 J^{2n+1}_{a,X_{L_i}/{L_i}} \ar[r] & 0
 }
$$
defining the distinguished models $J^{2n+1}_{a,X_{L_i/L_i}}$. We want to show 
that the distinguished models differ by base change over $b_1^{-1}b_2:\operatorname{Spec}L_2\to \operatorname{Spec}L_1$. 
We will do this by showing that $P_1$ and $P_2$ differ by base change over $b_1^{-1}b_2$.

   Let $P_{1,L_2}\subseteq J(C_{L_2})$ be the base change of $P_1$ to
   $L_2$.  To show that $P_{1,L_2}=P_2$, it suffices  to show that for all
   natural numbers $N$, the $N$-torsion of $P_{1,L_2}$ and $P_2$ are
   equal\,; \emph{i.e.}, $P_{1,L_2}[N]=P_2[N]$.  But the  $N$-torsion  $P_i[N]$
    is equal to the kernel of the morphism ${\gamma_{i*}}:
   H^1(C_{b_i},\mmu_{N}) \to  H^{2n+1}(X_{b_i},\mmu_{N}^{\otimes (n+1)})$\,; see
   \cite[(2.3)]{ACMVdmij}. 
These are related by the diagram
$$\xymatrix@R=1em{H^1(C_{b_1},\mmu_{N}) \ar[d]_{\simeq}^{\sigma^*}
\ar[r]^{(\gamma_1)_*\qquad } & H^{2n+1}(X_{b_1},\mmu_{N}^{\otimes (n+1)})
\ar[d]_{\simeq}^{\sigma^*} \\
H^1(C_{b_2},\mmu_{N}) \ar[r]^{(\gamma_2)_*\qquad } & H^{2n+1}(X_{b_2},\mmu_{N}^{\otimes
(n+1)})
}$$
implying that $P_{1,L_2}[N]=P_2[N]$, completing the proof.

\vskip .2 cm \noindent \emph{Step 2\,: The distinguished normal functions}.  
We now show that the distinguished normal functions $\delta_{Z_{L_1}}$ and
$\delta_{Z_{L_2}}$ fit into the fibered product diagram
\eqref{E:Ch-Em-Pr-1}\,; \emph{i.e.}, that they agree under base change.   To
begin, recall that the distinguished normal function $\delta_{Z_{L_i}}$ is
characterized by the condition that
$(\delta_{Z_{L_i}})_{an}=\nu_{(Z_{b_i})_{an}}$\,; \emph{i.e.}, the analytic map induced
by $\delta_{Z_L}$ agrees with the analytic normal function (Remark \ref{R:Uni-DNF}).
Algebraically, this is the condition that $(\delta_{Z_{L_i}})_{b_i}=AJ\circ
w_{Z_{b_i}}$ (see \S \ref{S:DNF}).
In other words, to complete the proof of the theorem, it suffices to
show that $(\delta_{Z_{L_1}})_{b_2}=(\delta_{Z_{L_2}})_{b_2}$.
Put differently, by virtue of the fact that $J^{2n+1}_a(X_{b_1})$ and $J^{2n+1}_a(X_{b_2})$ 
have been  identified in Step 1 via base change over $\sigma:\operatorname{Spec}\mathbb C
\to \operatorname{Spec}\mathbb C$, it suffices to show that the outer rectangle of the diagram 
\begin{equation}\label{E:AJ-sig-com}
\xymatrix@R=1.3em{
(\operatorname{Spec}\mathbb C)_\cx(\mathbb C)\ar[r]^{w_{ Z_{b_1}}}
\ar@{->}[d]^\sigma&\operatorname{A}^{n+1}(X_{b_1})\ar@{->}[r]^<>(0.5){AJ}
\ar[d]^<>(0.5){\sigma^*}&J^{2n+1}_a(X_{b_1})(\mathbb C)
\ar[d]^<>(0.5){\sigma(\mathbb C)}\\
(\operatorname{Spec}\mathbb C)_\cx(\mathbb C)\ar[r]^{w_{ Z_{b_2}}}&\operatorname{A}^{n+1}(X_{b_2})\ar@{->}[r]^<>(0.5){AJ}&J^{2n+1}_a(X_{b_2})(\mathbb C)\\
}
\end{equation}
is commutative.  In other words, it suffices to show that $\sigma AJ(Z_{b_1})=AJ((Z_{b_2}))$.

To do this, we  first show the  commutativity of the right hand side of
\eqref{E:AJ-sig-com} on torsion (see also Remark \ref{R:AJ-sig-com}).  
For this one considers the diagram
$$\xymatrix@R=1.3em{
\operatorname{A}^{n+1}(X_{b_1})[N] \ar[r]^{AJ\ }  \ar[d]^{\sigma^*[N]}  
& J^{2n+1}_a(X_{b_1})[N] \ar[d]^{\sigma(\mathbb C)[N]} \ar@{^{(}->}[r]
& H^{2n+1}(X_{b_1}, \mmu_N^{\otimes (n+1)}) \ar[d]^{\sigma^*}\\
\operatorname{A}^{n+1}(X_{b_2})[N] \ar[r]^{AJ \ }
& J^{2n+1}_a(X_{b_2})[N] \ar@{^{(}->}[r]
& H^{2n+1}(X_{b_2}, \mmu_N^{\otimes (n+1)})
}
$$
which is commutative for all integers $N>1$, due to the fact that by construction the
right-hand square is commutative, and the fact that the outer square is commutative
because the composition of horizontal arrows is the Bloch map, which is
functorial with respect to automorphisms of the field (see \emph{e.g.}, \cite[\S 2.3]{ACMVdmij}).

 Now, since $Z$ is defined over $K$, there exist by
\cite[Thm.~1]{ACMVabtriv} an abelian variety $A$ over~$K$, a $K$-point $p\in A(K)$,
and a correspondence $\Xi \in \operatorname{CH}^{n+1}(A\times_K X)$ such that $Z
= \Xi_p - \Xi_0$. Since the Abel--Jacobi map is a regular homomorphism, the base change of $\Xi$ along $b_i$ induces a
homomorphism $\psi_{\Xi,i} : A_{b_i} \to J^{2n+1}_{a}(X_{b_i})$ with $\psi_{\Xi,i} (q) = AJ( \Xi_q - \Xi_0)$, in
particular $(\psi_{\Xi})_i (p) = AJ(Z_{b_i})$. We have then a not \emph{a priori}  commutative diagram
$$\xymatrix@R=1.3em{
	A_{b_1} \ar[r]^<>(0.5){\psi_{\Xi,1}}  \ar[d]^{\sigma^*}  & J^{2n+1}_a(X_{b_1})
	\ar[d]^{\sigma}\\
	A_{b_2} \ar[r]^<>(0.5){\psi_{\Xi,2}} & J^{2n+1}_a(X_{b_2}).
}
$$
However, since the right hand side of \eqref{E:AJ-sig-com} is commutative on torsion, 
this diagram is also commutative on torsion  (note that $\Xi_q - \Xi_0$ is torsion 
in $\operatorname{A}^{n+1}(X_{b_i})$ whenever $q$ is a torsion point in $A_{b_i}$\,; \emph{e.g.} \cite[Lem.~3.2]{ACMVdmij})
and since torsion points are dense, the diagram is in fact commutative, thereby
establishing the desired identity $\sigma AJ(Z_{b_1}) = AJ(Z_{b_2})$.
\end{proof}

According to the  Bloch--Beilinson 
philosophy (see \emph{e.g.} \cite[Lecture~3]{green-spread}), if $X$ is a smooth projective variety defined over a subfield $K\subseteq \cx$, then the kernel of the Abel--Jacobi map  $AJ : \operatorname{CH}^{n+1}(X)_{\mathrm{hom}} \to \operatorname{CH}^{n+1}(X_\cx)_{\mathrm{hom}} \to J^{2n+1}(X_\cx) $ defined on homologically trivial cycles defined over $K$  should be independent of the choice of embedding $K\hookrightarrow \cx$, after tensoring with $\rat$. Even such a concrete consequence of the  Bloch and Beilinson conjectures remains wide open. As a noteworthy consequence of Proposition \ref{P:DNF-K}, we can establish this, with integral coefficients, for algebraically trivial cycle classes\,:

\begin{cor}\label{Cor:fieldemb}
Let $X$ be a smooth projective variety over a field $K$ of finite transcendence degree over $\rat$, and let $Z\in \operatorname{A}^{n+1}(X)$ be an algebraically trivial cycle class. Let $b : K \hookrightarrow \cx$ be an inclusion of fields. Then $AJ(Z_b) = 0$ for one such embedding if and only if $AJ(Z_b) = 0$ for all such embeddings. \qed
\end{cor}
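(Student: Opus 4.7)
The corollary will follow essentially formally from Proposition \ref{P:DNF-K}, so my plan is to interpret the condition $AJ(Z_b)=0$ in terms of the canonical $K$-point $\delta_Z \in J^{2n+1}_{a,X/K}(K)$ produced by that proposition, and then invoke faithful flatness. The entire substantive content of the corollary is the embedding-independence of $\delta_Z$, which has already been established above.

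First I would unpack the Abel--Jacobi condition in terms of the distinguished normal function. By Remark \ref{R:Uni-DNF}, combined with the embedding-independence supplied by Proposition \ref{P:DNF-K} (or equivalently by the characterization $(\delta_{Z_{L_i}})_{b_i}=AJ\circ w_{Z_{b_i}}$ used in Step 2 of its proof), for any embedding $b:K\hookrightarrow \cx$ the pullback $(\delta_Z)_b \in J^{2n+1}_a(X_b)(\cx)$ coincides with $AJ(Z_b)$. Hence $AJ(Z_b)=0$ is equivalent to the statement that the pullback of the section $\delta_Z:\operatorname{Spec} K \to J^{2n+1}_{a,X/K}$ along $b:\operatorname{Spec}\cx \to \operatorname{Spec} K$ agrees with the zero section.

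Next, I would use the fact that the field extension $K\hookrightarrow \cx$ is faithfully flat; therefore, for any separated $K$-scheme $Y$, the base change map $Y(K)\to Y(\cx)$ is injective. Applied to $Y=J^{2n+1}_{a,X/K}$ with the $K$-points $\delta_Z$ and the identity section $0_K$, this yields that $\delta_Z=0_K$ in $J^{2n+1}_{a,X/K}(K)$ if and only if $(\delta_Z)_b=0$ for one, equivalently every, embedding $b$. Since $\delta_Z$ is intrinsically a $K$-point, it does not depend on $b$ at all, and the chain $AJ(Z_{b_1})=0 \Longleftrightarrow \delta_Z=0_K \Longleftrightarrow AJ(Z_{b_2})=0$ gives the corollary.

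There is no genuine obstacle; the difficulty has been absorbed entirely into Proposition \ref{P:DNF-K}, whose proof in turn depends on the representability result of \cite{ACMVabtriv} to compare the homomorphisms $\psi_{\Xi,1}$ and $\psi_{\Xi,2}$ via their action on dense torsion. In spirit, the corollary is the reason that the Bloch--Beilinson-style independence of the Abel--Jacobi kernel from the embedding $K\hookrightarrow \cx$ can be established integrally and unconditionally for algebraically trivial cycles, even though it remains conjectural for general homologically trivial cycles.
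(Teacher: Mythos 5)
Your proof is correct and takes essentially the paper's approach: the corollary is stated with no written proof precisely because it is a formal consequence of Proposition \ref{P:DNF-K}, and your argument is one valid way to make that formality explicit. The most direct reading of the paper's intent is to use the commutative diagram \eqref{E:AJ-sig-com} established in Step~2 of that proof, which gives $\sigma\, AJ(Z_{b_1}) = AJ(Z_{b_2})$ and hence the equivalence immediately since $\sigma$ is a bijection; your repackaging via the intrinsic $K$-point $\delta_Z \in J^{2n+1}_{a,X/K}(K)$ and injectivity of $J^{2n+1}_{a,X/K}(K)\to J^{2n+1}_{a,X/K}(\cx)$ is equivalent and equally valid, just a touch more elaborate than necessary.
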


\begin{rem}\label{R:AJ-sig-com}
In fact, one can also show that the right hand side of  \eqref{E:AJ-sig-com} is commutative on all cycle classes (\emph{i.e.}, not just the ones defined over $K$). 
In particular, the kernel of the Abel--Jacobi map restricted to algebraically trivial cycles (not necessarily defined over $K$) is independent of 
the choice of an embedding $K\hookrightarrow \cx$, in the sense that if $Z \in \operatorname{A}^{n+1}(X_\cx)$ is such that $AJ(Z) = 0$, then $AJ(Z^\sigma) = 0$ for all automorphisms $\sigma \in \mathrm{Aut}(\cx)$. For brevity, we have omitted the proof.  
\end{rem}

\subsection{Distinguished models and distinguished normal functions of very
general fibers}\label{S:DM-DNF}

We now focus on the main case of interest in this paper.  
Let $F\subseteq \mathbb C$ be a field of finite  transcendence degree over
$\mathbb Q$.  
Let  $f:X\to B$ be a smooth surjective projective morphism of smooth integral
schemes of finite type over $F$,  and let $n$ be a nonnegative  integer.     Let
$\mathsf f:\mathsf X\to \mathsf B$ be the associated map of complex manifolds.  
Let $\eta$ be the generic point of $B$ with residue field $K$, which is also of
finite transcendence degree over $\mathbb Q$.

\begin{exa}[Distinguished model of a very general fiber]\label{E:DM-CCFam}
In the notation above, 
fix an inclusion $K\subseteq  \mathbb C$, let $X_{\eta}$ be the generic fiber,
and let $J^{2n+1}_{a,X_{\eta}/K}$ be the corresponding distinguished model
associated to $(X_{\mathbb C})_{an}$ (see also Proposition \ref{P:DNF-K}). 
Now let  $\mathsf u\in \mathsf B$ be an $F$-very general point\,; \emph{i.e.},
$\mathsf u$ corresponds to a closed $\mathbb C$-point 
$u:\operatorname{Spec}\mathbb C\to B_{\mathbb C}$, which is itself a morphism
of $\mathbb C$-schemes,  so that the composition  
$u:\operatorname{Spec}\mathbb C\to B_{\mathbb C}\to B$ has image the generic
point of $B$, given by a second  inclusion $i:K\hookrightarrow \mathbb C$.    
From Proposition \ref{P:DNF-K}, 
the distinguished
model of $\mathsf J^{2n+1}(\mathsf X_{\mathsf u})$ is (after pull back to~$K$) isomorphic over $K$ to
$J^{2n+1}_{a,X_{\eta}/K}$.  In fact,   the distinguished models of all $F$-very
general fibers agree up to isomorphism over $K$. Put another way, for any $F$-very 
general point $\mathsf u\in \mathsf B$, corresponding to a point $u:\operatorname{Spec}\mathbb C\to B_{\mathbb C}$, 
\begin{equation*}\label{E:DM-VGF}
 ((J^{2n+1}_{a,X_{\eta}/K})_{ u})_{an}=\mathsf J^{2n+1}_a(\mathsf
X_{\mathsf u}).
\end{equation*}
\end{exa}

\begin{exa}[Distinguished normal function of a very general
fiber]\label{E:NF-CCFam}
In the same situation as Example \ref{E:DM-CCFam}, let $Z\in
\operatorname{CH}^{n+1}(X)$ be such that every Gysin fiber is algebraically
trivial.  Let $\delta_Z:\operatorname{Spec}K\to J^{2n+1}_{a,X_\eta/K}$ be the
distinguished normal function  \eqref{E:Def-DNF}, which \emph{a priori} depends on the inclusion $K\subseteq \mathbb C$.
 However, from Proposition \ref{P:DNF-K}, the distinguished normal function associated to any 
$F$-very general point $\mathsf u\in \mathsf B$ (corresponding to an inclusion 
$i:K\hookrightarrow \mathbb C$)  agrees (after pull back to $K$) with
$\delta_Z$. 
Put another way, for any $F$-very general point $\mathsf u\in \mathsf B$, 
corresponding to a point $u:\operatorname{Spec}\mathbb C\to B_{\mathbb C}$,  
$$
((\delta_Z)_{u})_{an}=(\nu_{\mathsf Z})_{\mathsf
u}:\mathsf
u\to \mathsf J^{2n+1}_a(\mathsf X_{\mathsf u})
$$ 
where $\nu_{\mathsf Z}:\mathsf B\to \mathsf J^{2n+1}(\mathsf X/\mathsf B)$ is
the  analytic normal function associated to $Z$.
\end{exa}

\begin{rem}[The geometric generic fiber]\label{R:Geom-Gen-Fib}
Another way to frame the relationship between the distinguished models and
distinguished normal functions associated to different $F$-very general points
of $\mathsf B$ is to observe that the $\mathbb C$-scheme $X_u$ associated to an 
$F$-very general fiber $\mathsf X_{\mathsf u}$ of $\mathsf f:\mathsf X\to
\mathsf B$ is isomorphic as a $K$-scheme to a geometric generic fiber
$X_{\bar{\cx(B)}}$.  In fact,  after choosing a $K$-isomorphism $\alpha:\mathbb
C\to 
\bar{\cx(B)}$, we have that $X_u$ and $X_{\bar{\cx(B)}}$ are isomorphic over
$\alpha$ (see \emph{e.g.},
\cite[Lem.~2.1]{vial13}). 
\end{rem}

\begin{rem}[First consequence for zero loci of normal functions\,: Conjecture~\ref{CJ:ANFMK} in the algebraically $F$-motivated case] \label{R:FC-NF}
Already, we obtain a quick proof of much of Corollary \ref{C:ZL}, \emph{i.e.}, that that
the zero-locus of $\nu_{{\mathsf Z}}$ is a countable union of algebraic
subsets of $\mathsf B$ defined over $F$. 
Indeed, from Example~\ref{E:NF-CCFam} we have that if $(\nu_{\mathsf Z})_{\mathsf u}$
is zero for one $F$-very general point $\mathsf u\in \mathsf B$, 
then it is zero for every $F$-very general point $\mathsf u'\in \mathsf B$ (see also Corollary \ref{Cor:fieldemb}).
By continuity, if $\nu_{\mathsf Z}$ is not identically zero, then  the
zero locus of $\nu_{\mathsf Z}$ is contained in the complement of the $F$-very
general points, which is a countable union of algebraic subsets of $\mathsf B$ defined
over
$F$, and not equal to $\mathsf B$. 
Restricting $\nu_{\mathsf Z}$ to an $F$-desingularization of each
irreducible component and arguing recursively on each component, one obtains the claim.  
Note that together with Conjecture \ref{CJ:GG}, one obtains a
proof of Corollary~\ref{C:ZL}. We will, however, give below a direct proof of
Theorem~\ref{T:ANF-MK}, and hence of Corollary~\ref{C:ZL}, that does not rely on
the validity of Conjecture~\ref{CJ:GG}.
\end{rem}

\section{Spreading the distinguished model and distinguished normal function}

We now consider a family of smooth complex projective varieties, and descend the
image of the Abel--Jacobi map of a very general fiber to the generic point of
the base of the family.  We then spread this to an open subset of the base.  The
following theorem collects some properties of this spread.   (Note
that the caveat of Remark \ref{R:Jnotation} already applies to the
abelian scheme $\mathsf J^{2n+1}_a(\mathsf X_{\mathsf U}/\mathsf U)$
constructed below.)

\begin{teo}\label{T:ANF-MK-LG}
Let $F\subseteq \mathbb C$ be a subfield of finite  transcendence degree over
$\mathbb Q$, 
let $f:X\to B$ be a smooth surjective projective morphism of smooth integral
varieties of finite type over $F$,  and let $n$ be a nonnegative integer.   Let
$\mathsf f:\mathsf X\to \mathsf B$ be the associated morphism of complex
analytic spaces.

Let $\eta$ be the generic point of $B$ with residue field $K$, and fix an
inclusion $K\hookrightarrow  \mathbb C$.  
Let $X_{\eta}$ be the
generic fiber of $X$ over $K$,  
let $J^{2n+1}_{a,X_{\eta}/K}$ be the
distinguished model of $\mathsf J^{2n+1}_a((X_{\mathbb C})_{an})$ over $K$  and
let  
\begin{equation}\label{E:Gam-N-Def}
\Gamma \in
\operatorname{CH}^{\dim
(J^{2n+1}_{a,X_{\eta}/K})+n}(J^{2n+1}_{a,X/K}\times_{K} 
X_{\eta}) \ \ \ \text{ and } \ \ \ M\in \mathbb Z_{>0}
\end{equation}
be the correspondence and integer, respectively, from Theorem \ref{T:DMIJ-0} (see also Proposition \ref{P:DNF-K}).
Spread this data to a Zariski open subset $U\subseteq B$. More precisely, let
$U\subseteq B$ be a Zariski open subset over which 
there is an abelian scheme $$g:J^{2n+1}_{a,X_U/U}\to U$$ with generic fiber
isomorphic to $J^{2n+1}_{a,X_{\eta}/\eta}$, and a cycle $$\Gamma_U \in
\operatorname{CH}^{\dim
(J^{2n+1}_{a,X_{\eta}/K})+n}(J^{2n+1}_{a,X_U/U}\times_{U}
X_{U})$$ with $(\Gamma_U)_{\eta}=\Gamma$.
Let $\mathsf U\subseteq \mathsf B$ be the Zariski open subset corresponding to
$U\subseteq B$, and let $\mathsf \Gamma_{\mathsf U}$ be the corresponding complex analytic 
correspondence.

\begin{enumerate}
\item  For every prime number $\ell$  the correspondence $\Gamma_U$ induces a
morphism of  sheaves on $U$
\begin{equation}\label{E:Lisse}
(\Gamma_U)_*:R^1 g_*\mathbb Q_\ell \stackrel{}{\longrightarrow} 
R^{2n+1}(f|_U)_* \mathbb Q_\ell(n)
\end{equation}
which at the  geometric generic  point $\bar u:\operatorname{Spec}\bar K\to
\operatorname{Spec}K\to U$,  
induces an inclusion of $\operatorname{Gal}(\bar K/K)$-representations
\begin{equation}\label{E:DMIJ-Repr}
(\Gamma_{U,\bar u})_* : H^1((J^{2n+1}_{a,X_{\eta}/K})_{\bar K},\rat_\ell)
\hookrightarrow H^{2n+1}(X_{\bar
K},\rat_\ell(n))
\end{equation}
(with image $\coniveau^nH^{2n+1}(X_{\bar K},\rat_\ell(n))$, where $\coniveau^\bullet$ denotes the geometric coniveau filtration).

\item  The correspondence  $\mathsf \Gamma_{\mathsf U}$ induces a morphism of
variations of pure integral Hodge structures and thus a morphism of
relative complex tori over $\mathsf U$
\begin{equation}\label{E:DM-Spd}
(\mathsf \Gamma_{\mathsf U})_*:(J^{2n+1}_{a,X_{U}/U})_{an}\to \mathsf
J^{2n+1}(\mathsf X/\mathsf B)|_{\mathsf U}.
\end{equation}
The image of \eqref{E:DM-Spd} is an algebraic relative complex torus $\mathsf
A_{\mathsf U}\subseteq \mathsf J^{2n+1}(\mathsf X/\mathsf B)|_{\mathsf U}$ over
$\mathsf U$, induced by an abelian scheme $$A_U/U,$$ defined over $F$, with
generic fiber $(A_U)_{\eta}$ isomorphic to $J^{2n+1}_{a,X_{\eta}/K}$ over $K$.

For $F$-very general $\mathsf u\in \mathsf U$, the morphism \eqref{E:DM-Spd} 
restricts to a morphism 
\begin{equation}\label{E:DM-Repr-CC}
((\mathsf \Gamma_{\mathsf U})_*)_{\mathsf u}:\mathsf J^{2n+1}_{a}(\mathsf X_{\mathsf
u})\to \mathsf J^{2n+1}(\mathsf X_{\mathsf u})
\end{equation}
that  is given by $M \cdot \mathsf i^{2n+1}_{a,\mathsf X_{\mathsf u}}$, \emph{i.e.}, $M$ times the natural inclusion (where $M$ is defined in  \eqref{E:Gam-N-Def}).  In particular, the image
of \eqref{E:DM-Repr-CC}, \emph{i.e.}, the fiber $\mathsf A_{\mathsf U,\mathsf u}$,  is
equal to $\mathsf J^{2n+1}_a(\mathsf X_{\mathsf u})$.

\item

Let $Z\in \operatorname{CH}^{n+1}(X)$ be a cycle class with every Gysin fiber
algebraically trivial, let $Z_{\eta}$ be the restriction of $Z$ to the generic fiber $X_\eta$, and let 
\begin{equation*}\label{E:NF-etaB}
\delta_{Z_{\eta}}:\operatorname{Spec}K\to J^{2n+1}_{a,X_{\eta}/K}
\end{equation*}
be the associated distinguished normal function  (see \eqref{E:Def-DNF} and Proposition \ref{P:DNF-K}). 
After possibly replacing the Zariski open subset $U\subseteq B$   with a smaller
Zariski open subset, let $\delta:U\to J^{2n+1}_{a,X_U/U}$ be the spread of the
distinguished normal function, 
and let $\delta_{an} :\mathsf U \to (J^{2n+1}_{a,X_U/U})_{an}$  denote the
associated morphism of complex analytic spaces.    We have the  following
formula relating the normal function  $\nu_{\mathsf Z}$, the spread $\delta_{an}$ of
the
distinguished normal function,  and the morphism \eqref{E:DM-Spd}\,:
\begin{equation}\label{E:NFanNFalg}
(\mathsf \Gamma_{\mathsf U})_*\circ \delta_{an}  = M\cdot  \nu_{\mathsf Z}|_{\mathsf
U},
\end{equation}
and $ M\cdot \nu_{\mathsf Z }|_{\mathsf U}$ is algebraic, and defined over $F$. 
\end{enumerate}

\end{teo}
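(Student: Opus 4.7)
The plan is to establish the three parts sequentially, starting from the data over the generic point and using the Noetherian nature of the base to spread everything to a Zariski open $U \subseteq B$ defined over $F$. The abelian scheme $g:J^{2n+1}_{a,X_U/U}\to U$ exists on some open $U$ because $J^{2n+1}_{a,X_\eta/K}$ is an abelian variety over $K$ and any finite type model spreads out; similarly $\Gamma$ spreads to $\Gamma_U$ after shrinking $U$. For part (1), the standard formalism of correspondences acting on $\ell$-adic cohomology produces a morphism of lisse $\mathbb Q_\ell$-sheaves \eqref{E:Lisse} on $U$. At a geometric generic point $\bar u$, the fiber of this map is precisely the Galois-equivariant action of $\Gamma$ on $\ell$-adic cohomology. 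By Lemma \ref{L:Gamm-M} (applied to the modified correspondence $\Gamma'$ obtained by composing with a complement of $\psi$), the composition of $(\Gamma_{U,\bar u})_*$ with the inclusion into $H^{2n+1}$ is, up to multiplication by $M$, the natural inclusion, which forces \eqref{E:DMIJ-Repr} to be injective. The image coincides with $\coniveau^n H^{2n+1}(X_{\bar K},\mathbb Q_\ell(n))$ because, after base change to $\mathbb C$, it agrees with the image of $\mathsf \Gamma_*: H^1(J^{2n+1}_{a,X_\eta},\mathbb Q_\ell)\to H^{2n+1}(X,\mathbb Q_\ell(n))$, which is known by construction of the distinguished model to be the algebraic part (the transcendental Hodge classes contributing to $\mathsf J^{2n+1}_a$ equal the geometric coniveau by the Abel--Jacobi / coniveau identification).

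For part (2), the analytic correspondence $\mathsf\Gamma_{\mathsf U}$ induces a morphism of polarizable variations of integral Hodge structures, and hence the morphism \eqref{E:DM-Spd} of relative complex tori over $\mathsf U$. To show that the image is an abelian scheme defined over $F$, I would use that the image of a morphism of abelian schemes (after shrinking $U$ if needed to guarantee flatness of the factorization through image) is an abelian scheme: factor $g$ through the isogeny class of its image via the universal surjection-inclusion factorization for abelian schemes over a normal base. The resulting image scheme $A_U/U$ is algebraic and defined over $F$ because $J^{2n+1}_{a,X_U/U}$ and $\Gamma_U$ are defined over $F$. The fiberwise statement \eqref{E:DM-Repr-CC} follows from Example \ref{E:DM-CCFam} together with Theorem \ref{T:DMIJ-0} applied to the very general fiber $X_{\mathsf u}$: under the identification $((J^{2n+1}_{a,X_\eta/K})_u)_{an} = \mathsf J^{2n+1}_a(\mathsf X_{\mathsf u})$, the correspondence $\Gamma_{U,\mathsf u}$ is precisely the correspondence of Theorem \ref{T:DMIJ-0} for $X_{\mathsf u}$ (up to base change), and so its induced map on complex tori equals $M\cdot \mathsf i^{2n+1}_{a,\mathsf X_{\mathsf u}}$.

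For part (3), the distinguished normal function $\delta_{Z_\eta}: \operatorname{Spec} K \to J^{2n+1}_{a,X_\eta/K}$ spreads, after possibly shrinking $U$, to an algebraic section $\delta: U \to J^{2n+1}_{a,X_U/U}$ defined over $F$. At an $F$-very general point $\mathsf u \in \mathsf U$, Example \ref{E:NF-CCFam} gives $((\delta)_u)_{an} = (\nu_{\mathsf Z})_{\mathsf u}$ under the identification of the fiber with $\mathsf J^{2n+1}_a(\mathsf X_{\mathsf u})$. Combining this with \eqref{E:DM-Repr-CC}, which identifies $((\mathsf\Gamma_{\mathsf U})_*)_{\mathsf u}$ with $M\cdot \mathsf i^{2n+1}_{a,\mathsf X_{\mathsf u}}$, gives the fiberwise identity
\begin{equation*}
((\mathsf\Gamma_{\mathsf U})_* \circ \delta_{an})(\mathsf u) = M \cdot \nu_{\mathsf Z}(\mathsf u)
\end{equation*}
at every $F$-very general $\mathsf u$. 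Since both sides are holomorphic sections of $\mathsf J^{2n+1}(\mathsf X/\mathsf B)|_{\mathsf U}$ and agree on a dense subset (the complement of a countable union of proper $F$-algebraic subsets of $\mathsf U$), they agree everywhere on $\mathsf U$. This establishes \eqref{E:NFanNFalg}, and since $(\mathsf\Gamma_{\mathsf U})_* \circ \delta_{an}$ is by construction the analytification of a morphism of $F$-schemes, $M\cdot \nu_{\mathsf Z}|_{\mathsf U}$ is algebraic and defined over $F$.

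The main obstacle is part (2), specifically identifying the image of \eqref{E:DM-Spd} with an algebraic $F$-abelian scheme and verifying the fiberwise formula $M\cdot \mathsf i^{2n+1}_{a,\mathsf X_{\mathsf u}}$: this requires reconciling the abstract construction of the distinguished model (which is uniqueness-driven via Galois equivariance) with its concrete action on the Hodge-theoretic intermediate Jacobian of a very general fiber. Proposition \ref{P:DNF-K} is crucial here because it guarantees that the distinguished model of $X_\eta$ base-changes correctly to recover the distinguished model of any very general fiber, so that the spread $\Gamma_U$ really does compute $M$ times the inclusion, rather than only a rational multiple that might depend on the choice of very general point.
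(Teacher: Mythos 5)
Your overall strategy matches the paper's: part (1) by the standard action of correspondences on $\ell$-adic sheaves and specialization to the geometric generic point, part (2) by the Hodge-theoretic action of $\mathsf\Gamma_{\mathsf U}$ and identification of fibers via Example~\ref{E:DM-CCFam}, and part (3) by spreading $\delta_{Z_\eta}$, checking the identity at $F$-very general points via Example~\ref{E:NF-CCFam} and part (2), and using continuity plus density. However there are two genuine gaps in your part (2).

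First, you argue that the image $\mathsf A_{\mathsf U}$ is an $F$-abelian scheme by invoking ``the image of a morphism of abelian schemes over a normal base.'' But the target of \eqref{E:DM-Spd} is $\mathsf J^{2n+1}(\mathsf X/\mathsf B)|_{\mathsf U}$, the family of Griffiths intermediate Jacobians, which is only a relative complex torus and is not a priori algebraic, let alone an abelian scheme. So there is no morphism of abelian schemes to take the image of, and in particular no automatic $F$-structure on the image. The correct argument is to show the \emph{kernel} of \eqref{E:DM-Spd} is an algebraic subgroup scheme defined over $F$ --- which follows, as in Lemma~\ref{L:Gamm-M}, by checking that its torsion is preserved by $\operatorname{Aut}(\mathbb C/F)$ because the morphism is induced by a cycle defined over $F$ --- and then take the quotient of the $F$-abelian scheme $J^{2n+1}_{a,X_U/U}$ by this kernel. (Alternatively, algebraicity alone follows because the image is dominated by an algebraic complex torus, but that does not give the $F$-structure.)

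Second, the statement that the generic fiber $(A_U)_\eta$ is isomorphic \emph{over $K$} to $J^{2n+1}_{a,X_\eta/K}$ --- not merely $K$-isogenous --- is asserted but not argued. This is needed in part (3) to spread $\delta_{Z_\eta}$ into $A_U$. The point is to identify the kernel $G$ of the $K$-isogeny $J^{2n+1}_{a,X_\eta/K}\to(A_U)_\eta$: pulling back to a very general $\mathsf u$ one sees from the fiberwise formula that the composite $\mathsf J^{2n+1}_a(\mathsf X_{\mathsf u})\twoheadrightarrow \mathsf A_{\mathsf U,\mathsf u}\xrightarrow{\ \cong\ }\mathsf J^{2n+1}_a(\mathsf X_{\mathsf u})$ is multiplication by $M$, so $G$ and $J^{2n+1}_{a,X_\eta/K}[M]$ are reduced $K$-subschemes with the same $\mathbb C$-points and hence coincide, giving $(A_U)_\eta\cong J^{2n+1}_{a,X_\eta/K}/J^{2n+1}_{a,X_\eta/K}[M]=J^{2n+1}_{a,X_\eta/K}$.
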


\begin{proof}
(1) Correspondences induce morphisms of sheaves, giving \eqref{E:Lisse}. 
\eqref{E:DMIJ-Repr} is just a statement about fibers of correspondences, and
follows from Theorem \ref{T:DMIJ-0} (but see also \cite[Thm.~A]{ACMVdmij}).

(2) Correspondences induce morphisms of Hodge structures, giving
\eqref{E:DM-Spd}.  
To show  that the image $\mathsf A_{\mathsf U}$ of  \eqref{E:DM-Spd} is
algebraic, one shows that the kernel of the morphism of relative complex tori is
algebraic.  For this it suffices to check that torsion is preserved by
$\operatorname{Aut}(\mathbb C/F)$, and it is easy to see this holds  from the
fact that the morphism is induced by an algebraic cycle defined over $F$ 
(as in Lemma \ref{L:Gamm-M})\,; alternatively, it is dominated by the
relative algebraic complex torus $(J^{2n+1}_{a,X_U/U})_{an}$.

The assertion \eqref{E:DM-Repr-CC} is just a statement about fibers of
correspondences, and Theorem \ref{T:DMIJ-0} and Example \ref{E:DM-CCFam} provide
the needed identification of fibers. One also uses the general observation that
the image of the multiplication by $M$ map is the same complex torus.   

The  final statement, that the generic fiber $(A_U)_{\eta}$ is isomorphic to
$J^{2n+1}_{a,X_{\eta}/K}$ over $K$, can be established as follows.  Let $G$ be
the kernel of the $K$-isogeny  
$J^{2n+1}_{a,X_{\eta}/K}\to  (A_U)_{\eta}$. The isogeny, when pulled back to
$\mathsf u$, gives a morphism
$$
\xymatrix{
\mathsf J^{2n+1}_a(\mathsf X_{\mathsf u}) \ar@{->>}[r]& \mathsf A_{\mathsf
U,\mathsf u}  \ar[r]^<>(0.5)\cong & \mathsf J^{2n+1}_a(\mathsf X_{\mathsf u})
}
$$
with composition equal to  the multiplication by $M$ map. Thus $G$ and
$J^{2n+1}_{a,X_{\eta}/K}[M]$ are reduced $K$-subschemes of
$J^{2n+1}_{a,X_{\eta}/K}$ with the same $\mathbb C$-points, and are therefore
the same scheme.  It follows that $(A_U)_{\eta}\cong
J^{2n+1}_{a,X_{\eta}/K}/J^{2n+1}_{a,X_{\eta}/K}[M]=J^{2n+1}_{a,X_{\eta}/K}$.

(3) 
The only thing to show is \eqref{E:NFanNFalg}.   Since both sides of the
equation are continuous functions, it suffices to prove the assertion for a
dense subset of $\mathsf U$, and in particular we can focus on  $F$-very
general points $\mathsf u\in \mathsf U$.  
The assertion then follows from  (2)  together with Example \ref{E:NF-CCFam}.
\end{proof}

We now use Theorem \ref{T:ANF-MK-LG} to prove Theorem \ref{T:ANF-MK} over a
Zariski dense open subset of the base\,:

\begin{cor}\label{C:ANF-MK}
Let 
$\mathsf f:\mathsf X\to \mathsf B$ be a smooth 
surjective
projective
morphism of complex algebraic manifolds,  let $n$ be a nonnegative
integer, let $\mathsf J^{2n+1}(\mathsf X/\mathsf B)\to \mathsf B$ be
the $(2n+1)$-st relative Griffiths intermediate Jacobian.  There is a Zariski
open subset $\mathsf U\subseteq \mathsf B$,   a relative algebraic complex
subtorus $\mathsf J^{2n+1}_a(\mathsf X_{\mathsf U}/\mathsf U)\subseteq \mathsf
J^{2n+1}(\mathsf X/\mathsf B)|_U$ over $\mathsf U$ such that for very general
$\mathsf u\in \mathsf U$ the fiber $\mathsf J_a(\mathsf X_{\mathsf U}/\mathsf
U)_{\mathsf u}\subseteq \mathsf J^{2n+1}(\mathsf X_{\mathsf u})$ is the image
$\mathsf J^{2n+1}_a(\mathsf X_{\mathsf u})$ of the Abel--Jacobi map 
$\mathsf {AJ}_{\mathsf X_{\mathsf u}}:\operatorname{A}^{n+1}(\mathsf X_{\mathsf
u})\to \mathsf J^{2n+1}(\mathsf X_{\mathsf u})$, 
and for any 
$\mathsf Z\in \operatorname{CH}^{n+1}(\mathsf X)$ with every Gysin fiber
algebraically trivial\,:
\begin{enumerate}
\item The restriction of the normal function  $
\nu_{\mathsf Z}|_{\mathsf U}:\mathsf U\to \mathsf J^{2n+1}(\mathsf X/\mathsf
B)|_{\mathsf U}$ has image contained in the relative algebraic complex torus 
$\mathsf J^{2n+1}_a(\mathsf X_{\mathsf U}/\mathsf U)$ and is an algebraic map.  
\item If, moreover, $\mathsf X$, $\mathsf B$, $\mathsf f$, and $\mathsf Z$ are
all defined over a subfield $F\subseteq \mathbb C$, then so are $\mathsf
J^{2n+1}_a(\mathsf X_{\mathsf U}/\mathsf U)$ and the morphisms $\mathsf
J^{2n+1}_a(\mathsf X_{\mathsf U}/\mathsf U)\to \mathsf U$ and $\nu_{\mathsf
Z}|_{\mathsf U}$.
\end{enumerate}

\end{cor}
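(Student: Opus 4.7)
The plan is to invoke Theorem~\ref{T:ANF-MK-LG} to produce all the relevant algebraic data on a Zariski open $U\subseteq B$, to define $\mathsf J^{2n+1}_a(\mathsf X_{\mathsf U}/\mathsf U):=(A_U)_{an}$, and then to upgrade the equation \eqref{E:NFanNFalg} of Theorem~\ref{T:ANF-MK-LG}(3) (which only identifies $M\cdot \nu_{\mathsf Z}|_{\mathsf U}$ with an algebraic section) to an equality of sections on the nose, by combining Example~\ref{E:NF-CCFam} at very general points with a continuity/density argument.

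\emph{Setup.} After passing to a connected component, assume $B$ is integral. Since all data is of finite presentation, there is a finitely generated subfield of definition $F\subseteq \mathbb C$ of finite transcendence degree over $\mathbb Q$; in part~(2) we may arrange that $F$ is contained in the given field of definition. Theorem~\ref{T:ANF-MK-LG} applied over $F$ then produces: a Zariski open $U\subseteq B$, the abelian schemes $A_U\to U$ and $J^{2n+1}_{a,X_U/U}\to U$ over $F$, an isogeny $\phi:J^{2n+1}_{a,X_U/U}\to A_U$, the analytic inclusion $\iota:(A_U)_{an}\hookrightarrow \mathsf J^{2n+1}(\mathsf X/\mathsf B)|_{\mathsf U}$, and the $F$-algebraic section $\delta:U\to J^{2n+1}_{a,X_U/U}$ spreading the distinguished normal function. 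Define $\mathsf J^{2n+1}_a(\mathsf X_{\mathsf U}/\mathsf U):=(A_U)_{an}$; the very general fiber agrees with $\mathsf J^{2n+1}_a(\mathsf X_{\mathsf u})$ by Theorem~\ref{T:ANF-MK-LG}(2).

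\emph{Transferring $\delta$ to a section of $A_U$.} The proof of Theorem~\ref{T:ANF-MK-LG}(2) produces a $K$-isomorphism $\psi_0:(A_U)_\eta\xrightarrow{\sim}J^{2n+1}_{a,X_\eta/K}$ characterized by $\psi_0\circ \phi_\eta=[M]$. By rigidity of abelian schemes, after possibly shrinking $U$ this extends to an $F$-isomorphism $\Psi:A_U\xrightarrow{\sim}J^{2n+1}_{a,X_U/U}$. Put $s:=\Psi^{-1}\circ\delta:U\to A_U$, an algebraic section defined over $F$. At any $F$-very general $\mathsf u$ the canonical fiberwise identifications $(J^{2n+1}_{a,X_U/U})_{an,\mathsf u}=(A_U)_{an,\mathsf u}=\mathsf J^{2n+1}_a(\mathsf X_{\mathsf u})$ (Example~\ref{E:DM-CCFam} and Theorem~\ref{T:ANF-MK-LG}(2)) make $\phi_{\mathsf u}$ into multiplication by $M$; the relation $\psi_0\circ\phi=[M]$ then forces $\Psi_{\mathsf u}$ to act as the identity on $\mathsf J^{2n+1}_a(\mathsf X_{\mathsf u})$. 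Consequently $s_{an}(\mathsf u)=\delta_{an}(\mathsf u)$, and by Example~\ref{E:NF-CCFam} this common value equals $\nu_{\mathsf Z}(\mathsf u)$.

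\emph{Conclusion.} The two holomorphic maps $\iota\circ s_{an}$ and $\nu_{\mathsf Z}|_{\mathsf U}$ from the irreducible complex manifold $\mathsf U$ into the Hausdorff space $\mathsf J^{2n+1}(\mathsf X/\mathsf B)|_{\mathsf U}$ agree on the dense set of $F$-very general points, hence on all of $\mathsf U$. Thus $\nu_{\mathsf Z}|_{\mathsf U}$ takes values in $\mathsf J^{2n+1}_a(\mathsf X_{\mathsf U}/\mathsf U)$ and is the analytification of the $F$-algebraic map $s:U\to A_U$, proving both (1) and (2). The main technical obstacle is verifying that $\Psi_{\mathsf u}$ really is the identity on $\mathsf J^{2n+1}_a(\mathsf X_{\mathsf u})$ under the canonical fiberwise identifications: this is what promotes the $M$-twisted formula \eqref{E:NFanNFalg} into an equality of sections and, in turn, delivers the field of definition claim without an extra descent argument on connected components of the $[M]$-preimage of the graph of $M\cdot\nu_{\mathsf Z}|_{\mathsf U}$.
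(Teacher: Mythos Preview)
Your proposal is correct and follows essentially the same approach as the paper: define $\mathsf J^{2n+1}_a(\mathsf X_{\mathsf U}/\mathsf U)$ as $(A_U)_{an}$, transport the spread of the distinguished normal function to a section of $A_U$ via the generic-fiber isomorphism $(A_U)_\eta\cong J^{2n+1}_{a,X_\eta/K}$, verify agreement with $\nu_{\mathsf Z}$ at $F$-very general points using Example~\ref{E:NF-CCFam}, and conclude by continuity. The paper carries out the transport by directly spreading the composition $\operatorname{Spec}K\to J^{2n+1}_{a,X_\eta/K}\cong (A_U)_\eta$ to a section of $A_U$, whereas you spread $\delta$ into $J^{2n+1}_{a,X_U/U}$ first and then compose with the spread isomorphism $\Psi^{-1}$; your observation that $\Psi_{\mathsf u}=\operatorname{id}$ at very general $\mathsf u$ (from $\Psi\circ\phi=[M]$ and $\phi_{\mathsf u}=[M]$) is exactly the verification the paper leaves implicit when it invokes \eqref{E:C-ANF-Eq} and Example~\ref{E:NF-CCFam}.
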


\begin{proof}
In case (1), since  $\mathsf f:\mathsf X\to \mathsf B$ is defined over some
field $F\subseteq \mathbb C$ that is finitely generated over $\mathbb Q$, we may
as well make this assumption from the start.  In case (2) we may take our field
of definition $F'$ to be contained in the given field $ F$, and can   base
change to $F$ at the end, if necessary, and so we may as well assume $F$ is
finitely generated over $\mathbb Q$ in case (2), as well.  We are then in the
situation of 
Theorem \ref{T:ANF-MK-LG},  and we will use  the notation from that theorem
moving forward. 

First, we can take  $\mathsf J^{2n+1}_a(\mathsf X_{\mathsf U}/\mathsf
U)=\mathsf A_{\mathsf U}\subseteq \mathsf J^{2n+1}(\mathsf X/\mathsf
B)|_{\mathsf U}$, the image of  \eqref{E:DM-Spd}. Let $A_U$ be the corresponding
abelian scheme over $U\subseteq B$.  
From Theorem \ref{T:ANF-MK-LG}(2) we have that the generic fiber  $(A_U)_{\eta}$
is isomorphic over $K$ to the distinguished model $J^{2n+1}_{a,X_{\eta}/K}$. 
With cycle class $Z\in \operatorname{CH}^{n+1}(X)$ as in the theorem, let 
\begin{equation}\label{E:C-ANF-Eq}
\delta_{Z_{\eta}}:\operatorname{Spec}K\to J^{2n+1}_{a,X_{\eta}/K}\cong
(A_U)_{\eta}
\end{equation}
be the distinguished normal function.  This then spreads to an $F$-morphism
$\delta:U\to A_U$, 
after possibly replacing $U$ with a smaller Zariski open subset.
The associated complex analytic map $\delta_{an}:\mathsf U\to \mathsf A_{\mathsf
U}\subseteq \mathsf J^{2n+1}(\mathsf X/\mathsf B)|_{\mathsf U}$ has the property
that for $F$-very general $\mathsf u\in \mathsf U$ we have $(\delta_{an})_{\mathsf
u}:\mathsf u\to \mathsf A_{\mathsf U,\mathsf u}=\mathsf J^{2n+1}_a(\mathsf
X_{\mathsf u})$ has image $\nu_{\mathsf Z}(\mathsf u)$\,; \emph{i.e.}, it agrees with the
complex analytic normal function.  This follows from \eqref{E:C-ANF-Eq} and
Example \ref{E:NF-CCFam}.  Therefore, since $\delta_{an}$ and $\nu_{\mathsf
Z}|_{\mathsf
U}$ are continuous and agree on the dense open subset of $F$-very general points
of $\mathsf U$, they agree on all of  $\mathsf U$. 
\end{proof}

\section{Extending the distinguished model and distinguished normal function}\label{S:Ext-DM-DNF}

In light of Corollary \ref{C:ANF-MK}, in order to prove Theorem \ref{T:ANF-MK},
we only need to show that the distinguished model and distinguished normal
function extend over the entire base $\mathsf B$.  We do this now.

\begin{proof}[Proof of Theorem \ref{T:ANF-MK}]
We use the notation from Theorem \ref{T:ANF-MK}, and the partial result,
Corollary~\ref{C:ANF-MK}.  As mentioned above, we only need to show that the
spread of the  distinguished model $\mathsf J^{2n+1}_a(\mathsf X_{\mathsf
U}/\mathsf U)\subseteq \mathsf J^{2n+1}(\mathsf X/\mathsf B)|_{\mathsf U}$  and
distinguished normal function $\delta_{an}:\mathsf U\to \mathsf J^{2n+1}_a(\mathsf
X_{\mathsf U}/\mathsf U)$ extend over the entire base $\mathsf B$, to give
algebraic objects over $F$.  To begin, we switch to the algebraic setting, and
let $J^{2n+1}_{a,X_U/U}/U$ be the algebraic model of $\mathsf J^{2n+1}_a(\mathsf
X_{\mathsf U}/\mathsf U)$, and let $\delta:U\to
J^{2n+1}_{a,X_U/U}$
 be the
associated morphism of $F$-schemes.  

First, we show that $J^{2n+1}_{a,X_U/U}$ extends to an abelian
scheme $g: \tilde J^{2n+1}_{a,X/B}\to B$ over $B$.  If $\dim B = 1$, we use
the
inclusion \eqref{E:DMIJ-Repr} and the N\'eron--Ogg--Shafarevich
criterion as in \cite[Lem.~6.1(a)]{ACMV14}.
If $\dim B\ge 2$, using
the dimension $1$ case we can extend over the generic points of
divisors in the boundary $B- U$, and thus we can assume
that $\operatorname{codim}_B(B-U)\ge 2$.  The
assertion now follows from the Faltings--Chai Extension Theorem
\cite[Cor.~6.8, p.185]{faltingschai}.

Next we show that the relative algebraic complex torus $\tilde{\mathsf
J}^{2n+1}_a(\mathsf X/\mathsf B):=  (\tilde J^{2n+1}_{a,X /B})_{an}$ induces  an
algebraic relative subtorus ${\mathsf J}^{2n+1}_a(\mathsf X/\mathsf B)\subseteq
\mathsf J^{2n+1}(\mathsf X/\mathsf B)$ extending ${\mathsf J}^{2n+1}_a(\mathsf
X_{\mathsf U}/\mathsf U)$.  For this we use the basic fact that any morphism of 
variations of Hodge structures extends over a locus of codimension at least $2$.  (Indeed, by purity, the natural map $\pi_1(\mathsf U, \mathsf u) \to \pi_1(\mathsf B, \mathsf u)$ is an isomorphism\,; now use \cite[Thm.~10.11, p.243]{PS08}, or the proof of \cite[Lem.~6.3, p.117]{hain95}.)
It now follows that the  inclusion $\mathsf J^{2n+1}_a(\mathsf X_{\mathsf
U}/\mathsf U)\subseteq \mathsf J^{2n+1}(\mathsf X/\mathsf B)|_{\mathsf U}$
extends to a morphism
\begin{equation}\label{E:JaX/BDef}
\tilde {\mathsf J}^{2n+1}_a(\mathsf X/\mathsf B)\longrightarrow \mathsf
J^{2n+1}(\mathsf X/\mathsf B)
\end{equation}
with finite kernel, which \emph{a priori} may be nontrivial only over $\mathsf
B-\mathsf U$.  We define ${\mathsf J}^{2n+1}_a(\mathsf X/\mathsf B)\subseteq
\mathsf J^{2n+1}(\mathsf X/\mathsf B)$ to be the image of \eqref{E:JaX/BDef}.  
Although it is not needed, we note that by Zariski's Main Theorem  the morphism
of relative algebraic complex tori  $\tilde {\mathsf J}^{2n+1}_a(\mathsf X/\mathsf B)\to 
{\mathsf J}^{2n+1}_a(\mathsf X/\mathsf B)$ is an isomorphism.

The normal function $\nu_{\mathsf Z}$ has image contained in $\mathsf
J^{2n+1}_a(\mathsf X/\mathsf B)$,  since $\mathsf J^{2n+1}(\mathsf X/\mathsf B)$
is separated, and $\nu_{\mathsf Z}|_{\mathsf U}$ has image contained in $\mathsf
J^{2n+1}_a(\mathsf X/\mathsf B)$, which is closed in $\mathsf J^{2n+1}(\mathsf
X/\mathsf B)$.
Finally, it is straight forward to check that $\nu_{\mathsf Z}$ is algebraic,
and defined over $F$, since $\nu_{\mathsf Z}|_{\mathsf U}$ is algebraic and
defined over~$F$.  For completeness, we include this last assertion as Lemma
\ref{L:UtoXext-1} below.
\end{proof}

\begin{lem}\label{L:UtoXext-1}
Let $X,Y$ be schemes of finite type over $F\subseteq \mathbb C$, with $X$ 
reduced and $Y$ separated,  let $U\subseteq X$ be a Zariski open subset, 
let $f_U:U\to Y$ be a morphism of $ F$-schemes, and assume that the associated
morphism of analytic spaces $(f_U)_{an}:\mathsf U\to \mathsf Y$ extends to a
morphism $\mathsf f: \mathsf X\to \mathsf Y$.  Then $f_U$ extends to  a morphism
$f:X\to Y$ over $ F$ with $f_{an}=\mathsf f$. 
\end{lem}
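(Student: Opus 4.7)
The plan is to extend $f_U$ by taking the Zariski closure of its graph. Since $Y$ is separated, the graph $\Gamma_{f_U}\subseteq U\times_F Y$ is closed in $U\times_F Y$, hence locally closed in $X\times_F Y$; let $Z\subseteq X\times_F Y$ denote its Zariski closure (with the reduced induced subscheme structure). The subscheme $Z$ is automatically defined over $F$, and tautologically $Z\cap(U\times_F Y)=\Gamma_{f_U}$, so the first projection $p_1:Z\to X$ restricts to an isomorphism over $U$. The goal reduces to showing that $p_1:Z\to X$ is an isomorphism; one can then set $f:=p_2\circ p_1^{-1}$, which is an $F$-morphism, and verify analytically that $f_{an}=\mathsf f$.

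To show that $p_1$ is an isomorphism it suffices (by faithfully flat descent from $\mathbb C$ to $F$) to establish that $(p_1)_{\mathbb C}$ is an isomorphism, and by standard GAGA for morphisms of finite type $\mathbb C$-schemes it suffices to show that the analytification $\mathsf Z\to \mathsf X$ is an isomorphism of analytic spaces. For this, I will identify $\mathsf Z$ with the graph $\mathsf \Gamma_{\mathsf f}$ of $\mathsf f$ inside $\mathsf X\times \mathsf Y$.

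The identification $\mathsf Z=\mathsf \Gamma_{\mathsf f}$ rests on the following two observations. First, $\mathsf \Gamma_{\mathsf f}$ is a closed analytic subset of $\mathsf X\times \mathsf Y$ containing $(\Gamma_{f_U})_{an}$, so since the analytic closure of any constructible subset of a complex algebraic variety coincides with its Zariski closure (applied to the locally closed subset $\Gamma_{f_U}\subseteq X_{\mathbb C}\times_{\mathbb C} Y_{\mathbb C}$), we obtain the containment $\mathsf Z\subseteq \mathsf \Gamma_{\mathsf f}$. Conversely, given $\mathsf x_0\in \mathsf X$, choose a sequence $\mathsf x_n\in \mathsf U$ converging to $\mathsf x_0$; then $(\mathsf x_n,\mathsf f(\mathsf x_n))\in (\Gamma_{f_U})_{an}\subseteq \mathsf Z$, and by the continuity of $\mathsf f$ together with the closedness of $\mathsf Z$ we conclude $(\mathsf x_0,\mathsf f(\mathsf x_0))\in \mathsf Z$, so $\mathsf \Gamma_{\mathsf f}\subseteq \mathsf Z$. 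Hence $\mathsf Z=\mathsf \Gamma_{\mathsf f}$, and the first projection $\mathsf Z\to \mathsf X$ is the inverse of the graph embedding, which is an isomorphism.

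The only point requiring any care will be the coincidence of Zariski and analytic closures for the locally closed subset $\Gamma_{f_U}$, and the descent step from $\mathbb C$ to $F$; both are standard (the former by a Chevalley-type argument, the latter because ``being an isomorphism'' is fpqc-local on the base).
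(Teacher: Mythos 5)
Your proof is correct and takes essentially the same route as the paper's: form the Zariski closure $Z$ of the graph $\Gamma_{f_U}$ in $X\times_F Y$, identify the analytification $\mathsf Z$ with the graph $\mathsf\Gamma_{\mathsf f}$ of $\mathsf f$, and conclude that the first projection $Z\to X$ is an $F$-isomorphism (so $f=p_2\circ p_1^{-1}$ works). The paper organizes the two sub-steps a bit differently: it first reduces to $X$ integral and $Y$ reduced, which both makes $\Gamma$ automatically integral and makes transparent the density of $\mathsf U$ in $\mathsf X$ that your convergent-sequence argument tacitly requires (the lemma as stated does not say $U$ is dense, and is false without that, so this reduction is doing real work); it then deduces $\mathsf\Gamma=\mathsf\Gamma_{\mathsf f}$ from the Zariski/analytic density comparison of [SGA1, Exp.~XII, Cor.~2.3], and for the final descent it cites an $\mathrm{Aut}(\mathbb C/F)$-equivariance criterion (Voisin) where you use fpqc descent together with the fact that analytification of finite-type $\mathbb C$-schemes reflects isomorphisms -- a fact you label "standard GAGA," which is slightly loose (classical GAGA concerns proper varieties; the statement you want is again [SGA1, Exp.~XII]) but correct. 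Neither difference is a gap; I would only suggest stating the density of $U$ explicitly, since your sequence argument and the very truth of the lemma depend on it.
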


\begin{proof}
We may immediately reduce to the case with $X$ integral, and $Y$ reduced.   Now
consider the graph $\Gamma_{f_U}\subseteq U\times_ F Y\subseteq X\times_ FY$,
which is closed in $U\times _ FY$ since $Y$ is separated.   Let $\Gamma\subseteq
X\times_ FY$ be the closure of $\Gamma_{f_U}$, which we observe is  an  integral
subscheme.    Now let $\mathsf \Gamma=\Gamma_{an}\subseteq \mathsf X\times
\mathsf Y$ be the associated complex analytic space.  We have by assumption that
$(\Gamma_{f_U})_{an}=(\mathsf \Gamma_{\mathsf f})|_{\mathsf U\times \mathsf Y}$.
Now $\mathsf \Gamma_{\mathsf f}$ is the analytic closure of $(\mathsf
\Gamma_{\mathsf f})|_{\mathsf U\times \mathsf Y}$ in $\mathsf X\times \mathsf
Y$.  Since $(\mathsf \Gamma_{\mathsf f})|_{\mathsf U\times \mathsf Y} 
=(\Gamma_{f_U})_{an}\subseteq \mathsf \Gamma$, we have $\mathsf \Gamma_{\mathsf
f}$ is equal to the analytic closure of $(\Gamma_{f_U})_{an}$ in $\mathsf
\Gamma$.   But $\Gamma_{f_U}$ is a Zariski open subset of an integral scheme
$\Gamma$ of finite type over $ F$, and so $\mathsf \Gamma_{\mathsf f}=\mathsf
\Gamma$. (In general, if $T$ is a locally closed subset of a scheme $Z/\cx$ which is locally of finite type, then $T$ is dense in $Z$ if and only if $\mathsf T$ is dense in $\mathsf Z$ \cite[Expos\'e XII, Cor.~2.3]{sga1}.)

Now we just need to conclude that $\Gamma$ induces a morphism $X\to Y$. 
It suffices to show that the second projection $q_1:\Gamma \to X$ is an
isomorphism. 
But this follows from the fact that a morphism between the complex analytic
spaces associated to two $ F$-schemes descends to $ F$ if and only if it is
$\mathrm{Aut}(\cx/ F)$-equivariant (apply, \emph{e.g.}, \cite[\S 5.2]{VoisinHodgeLoci} to its graph), and the fact that $\mathsf q_1:\mathsf
\Gamma\to \mathsf X$ is
an isomorphism.
\end{proof}

\begin{rem}
  \label{R:Jnotation}
The notation $\mathsf J^{2n+1}_a(\mathsf X/\mathsf B)$
may be slightly misleading, in the sense that formation of this object
is not compatible with base change in $\mathsf B$.  While the very general
fiber $\mathsf J^{2n+1}_a(\mathsf X/\mathsf B)_{\mathsf u}$ is equal to the
image of the Abel--Jacobi map $\mathsf J^{2n+1}_a(\mathsf X_{\mathsf u})$, in
some cases there is a countably infinite union of algebraic subsets of $\mathsf B$  
 over which
the geometric coniveau of the fiber  $\coniveau ^nH^{2n+1}(\mathsf X_{\mathsf
b},\mathbb Q)$ jumps.
 If this is the case, then over these points the fiber
$\mathsf J^{2n+1}_a(\mathsf X/\mathsf B)_{\mathsf b}$ is strictly contained
in $\mathsf J^{2n+1}_a(\mathsf X_{\mathsf b})$.   Nonetheless, we feel
that $\mathsf J^{2n+1}_a(\mathsf X/\mathsf B)$ is good notation in the sense
that this is the smallest relative algebraic subtorus of $\mathsf
J^{2n+1}(\mathsf X/\mathsf B)$ that interpolates between the very general
$\mathsf J^{2n+1}_a(\mathsf X_{\mathsf u})$.  Moreover, by part (1) of 
Theorem \ref{T:ANF-MK}, it serves as a target for every algebraically motivated normal function.
\end{rem}


\bibliographystyle{amsalphadoi}
\def\cprime{$'$}
\providecommand{\bysame}{\leavevmode\hbox to3em{\hrulefill}\thinspace}
\providecommand{\MR}{\relax\ifhmode\unskip\space\fi MR }
\providecommand{\MRhref}[2]{%
	\href{http://www.ams.org/mathscinet-getitem?mr=#1}{#2}
}
\providecommand{\href}[2]{#2}

\end{document}